\documentclass[hidelinks,onefignum,onetabnum]{siamart251216}
\usepackage{algorithmic}

\usepackage{amsfonts}
\usepackage{amsmath}
\usepackage{amssymb}
\usepackage{mathtools}
\usepackage{booktabs}
\usepackage{enumitem}
\usepackage{array}
\usepackage{graphicx}

\newsiamremark{remark}{Remark}

\headers{The Projected Hessian Quantification Theorem}{Meiling Wang and Yong Xia}

\title{The Projected Hessian Quantification Theorem: An Exact Duality for Constrained Eigenvalues \thanks{\today.
\funding{This work was supported by National Natural Science Foundation of China (Grant No. 125B2016, 12631012).}}}
\author{Meiling Wang\thanks{School of System Science and Statistics, Beijing Wuzi University, Beijing 101149, People’s Republic of China.
  (\email{wangmeiling@bwu.edu.cn}).}
\and Yong Xia \thanks{Corresponding author. LMIB of the Ministry of Education, School of Mathematical Sciences, Beihang University, Beijing 100191, People’s Republic of China.
  (\email{yxia@buaa.edu.cn}).}}

\ifpdf
\hypersetup{
  pdftitle={The Projected Hessian Quantification Theorem: An Exact Duality for Constrained Eigenvalues},
  pdfauthor={Meiling Wang, and Yong Xia}
}
\fi

\begin{document}

\maketitle

\begin{abstract}
The classical Projected Hessian Lemma, originating from Finsler's theorem, characterizes definiteness over a constraint null space through quadratic penalties.
However, it does not quantify the corresponding constrained eigenvalues or the associated eigenvalue penalty path.
This work develops a quantitative penalty theory for constrained symmetric eigenvalue problems and establishes exact characterizations of the extremal eigenvalues of the reduced Hessian via full-space penalized eigenvalue problems.
Three proofs are provided based on orthogonal decomposition, Schur complement analysis, and semidefinite programming duality.
We characterize finite exact recovery along the extremal-eigenvalue penalty paths and, in the absence of finite recovery, establish asymptotic convergence with a first-order error expansion and an explicit leading coefficient.
The associated Hellmann--Feynman sensitivity relation leads to a strategy for predicting penalty parameters.
Based on these results, we develop a matrix-free Penalty--Split--Merge method for successive constrained extremal eigenpairs using penalty continuation, Split--Merge iterations, deflation, and projected certification.
Numerical experiments illustrate the predicted penalty regimes, evaluate projected certification, and assess computational performance on moderate-scale benchmarks and large-scale matrix-free test instances.
\end{abstract}

\begin{keywords}
constrained eigenvalue problem,
quadratic penalty,
projected Hessian,
penalty exactness,
matrix-free eigensolver
\end{keywords}

\begin{MSCcodes}
90C20, 15A18, 65F15, 90C22
\end{MSCcodes}

\section{Introduction}
\label{sec:introduction}

Constrained eigenvalue problems arise in optimization, stability
analysis, structural mechanics, and other applications involving
spectral information on a prescribed subspace.
Let
$H\in\mathbb{S}^n$ be a real symmetric matrix (not necessarily a
Hessian), and let $A\in\mathbb{R}^{m\times n}$ have full row rank
with $m<n$. If $Z\in\mathbb{R}^{n\times(n-m)}$ has orthonormal columns
spanning $\mathcal{N}(A)$,
then the smallest eigenvalue of the restriction of $H$ to the feasible subspace is
$
\lambda_{\min}(Z^{\mathrm T}HZ).
$
Although this reduced-space formulation is natural, explicitly
constructing $Z$ can be unattractive for large-scale sparse problems:
a null-space basis may be dense and can destroy the sparsity or
matrix-free structure of the original operators. This motivates
full-space formulations in which the constraint is imposed implicitly
through a penalty mechanism.

A classical foundation for this viewpoint is provided by Finsler's
theorem~\cite{Finsler37}. In the present setting, it yields the following
projected Hessian characterization:

\begin{lemma}[Projected Hessian Lemma, Finsler, 1937~\cite{Finsler37}]
\label{thm:Finsler}
Let $H\in\mathbb{S}^n$, and suppose that
$A\in\mathbb{R}^{m\times n}$ has full row rank with $m<n$.
Let $Z$ have orthonormal columns spanning $\mathcal{N}(A)$. Then
\[
Z^{\mathrm T}HZ\succ0
~~\Longleftrightarrow~~
\text{there exists }\bar\rho>0\text{ such that }
H+\rho A^{\mathrm T}A\succ0
\text{ for every }\rho>\bar\rho .
\]
\end{lemma}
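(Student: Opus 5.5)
The plan is to prove the two implications separately. The reverse one is immediate, and the forward one will be handled by an orthogonal decomposition of $\mathbb{R}^n$ into $\mathcal{N}(A)$ and $\mathcal{R}(A^{\mathrm T})$.

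For ($\Leftarrow$), fix any $\rho>\bar\rho$. For nonzero $y\in\mathbb{R}^{n-m}$, the vector $x=Zy$ is nonzero because $Z$ has orthonormal columns, and $Ax=0$. Hence
\[
y^{\mathrm T}Z^{\mathrm T}HZy=x^{\mathrm T}(H+\rho A^{\mathrm T}A)x>0,
\]
so $Z^{\mathrm T}HZ\succ0$.

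For ($\Rightarrow$), let $Y\in\mathbb{R}^{n\times m}$ have orthonormal columns spanning $\mathcal{R}(A^{\mathrm T})=\mathcal{N}(A)^\perp$. Then $Q=[Z\ Y]$ is orthogonal, and congruence by $Q$ gives
\[
Q^{\mathrm T}(H+\rho A^{\mathrm T}A)Q=\begin{pmatrix} Z^{\mathrm T}HZ & Z^{\mathrm T}HY\\ Y^{\mathrm T}HZ & Y^{\mathrm T}HY+\rho\, Y^{\mathrm T}A^{\mathrm T}AY\end{pmatrix}.
\]
The key structural fact is that $M:=Y^{\mathrm T}A^{\mathrm T}AY=(AY)^{\mathrm T}(AY)$ is positive definite. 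Indeed, $AY$ is an $m\times m$ matrix, and it is injective because $AYw=0$ forces $Yw\in\mathcal{N}(A)\cap\mathcal{N}(A)^\perp=\{0\}$, hence $w=0$. Thus $\lambda_{\min}(M)=\sigma_{\min}(A)^2>0$, using full row rank.

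Set $P=Z^{\mathrm T}HZ\succ0$ and $B=Z^{\mathrm T}HY$. By the Schur complement criterion, the block matrix is positive definite if and only if
\[
Y^{\mathrm T}HY+\rho M-B^{\mathrm T}P^{-1}B\succ0 .
\]
Since $\lambda_{\min}(\rho M)\ge\rho\,\sigma_{\min}(A)^2$, Weyl's inequality shows this holds whenever
\[
\rho>\bar\rho:=\max\Bigl\{0,\ \frac{\lambda_{\max}\bigl(B^{\mathrm T}P^{-1}B-Y^{\mathrm T}HY\bigr)}{\sigma_{\min}(A)^2}\Bigr\}.
\]
For the statement's requirement $\bar\rho>0$, one may enlarge $\bar\rho$ to any positive number if this maximum equals $0$. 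Positive definiteness is preserved under congruence by the orthogonal $Q$, which gives $H+\rho A^{\mathrm T}A\succ0$ for every $\rho>\bar\rho$.

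The main point needing care is the uniform lower bound on $M$, i.e.\ that the penalty really acts coercively on the complement $\mathcal{R}(A^{\mathrm T})$; this is exactly where full row rank enters. An alternative would be a compactness argument on the unit sphere: a sequence of counterexample vectors for $\rho_k\to\infty$ would have a limit $x$ with $Ax=0$ and $x^{\mathrm T}Hx\le0$, a contradiction. I prefer the Schur route because it yields an explicit $\bar\rho$, which matches the quantitative aims of the paper.
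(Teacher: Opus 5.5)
Your proof is correct and follows the paper's own route: the paper recovers this lemma in Corollary~\ref{cor:definiteness_consequences} from Theorem~\ref{thm:main_duality} together with the monotonicity of $f$. The Schur-complement proof of that theorem (Section~\ref{subsec:schur_proof}) uses the same steps as your argument: congruence by an orthogonal $Q$, the Schur complement with respect to the null-space block, and the Weyl bound $\lambda_{\min}(K_\gamma+\rho C)\geq\lambda_{\min}(K_\gamma)+\rho\lambda_{\min}(C)$. Your reverse direction is the bound $f(\rho)\leq\lambda_*$ of Lemma~\ref{lem:spectral_bounds}. The differences are cosmetic. You take $\gamma=0$ and get strictness from the strict Schur criterion, which gives an explicit $\bar\rho$. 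The paper instead takes $\gamma<\lambda_*$ and passes through $\sup_\rho f(\rho)=\lambda_*$, so strictness there amounts to choosing $\gamma\in(0,\lambda_*)$.
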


Although the terminology "Projected Hessian" is commonly used in
optimization, the matrix $H$ in this paper is a general real symmetric
matrix and is not required to be a Hessian.
At the positive-semidefinite boundary, finite augmentation requires an
additional compatibility condition. Anstreicher and
Wright~\cite{AnstreicherWright00} established the following
characterization:

\begin{lemma}[Anstreicher--Wright, 2000~\cite{AnstreicherWright00}]
\label{thm:AnstreicherWright}
Let $H$, $A$, $Z$ be as in
Lemma~\ref{thm:Finsler}. Suppose that
$Z^{\mathrm T}HZ\succeq0$ is singular. Then there exists
$\bar\rho>0$ such that
$H+\rho A^{\mathrm T}A\succeq0$ for every
$\rho\geq\bar\rho$ if and only if
$
\mathcal{N}(Z^{\mathrm T}HZ)
=
\mathcal{N}(Z^{\mathrm T}H^2Z).
$
In this case, $H+\rho A^{\mathrm T}A$ is singular for all sufficiently
large $\rho$.
\end{lemma}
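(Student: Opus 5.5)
Work in the orthonormal basis $[Z\;Y]$, where $Y$ has orthonormal columns spanning $\mathcal{R}(A^{\mathrm T})$. In this basis
\[
H\simeq\begin{pmatrix} H_{11} & H_{12}\\ H_{21} & H_{22}\end{pmatrix},\qquad
A^{\mathrm T}A\simeq\begin{pmatrix}0&0\\0&M\end{pmatrix},
\]
with $H_{11}=Z^{\mathrm T}HZ$, $H_{12}=Z^{\mathrm T}HY$, and $M=Y^{\mathrm T}A^{\mathrm T}AY\succ0$ because $A$ has full row rank. The first step is to translate the null-space condition into block form. Since $ZZ^{\mathrm T}+YY^{\mathrm T}=I$, we have $Z^{\mathrm T}H^2Z=H_{11}^2+H_{12}H_{12}^{\mathrm T}$. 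For $v\in\mathcal{N}(H_{11})$ this gives $v^{\mathrm T}Z^{\mathrm T}H^2Zv=\|H_{12}^{\mathrm T}v\|^2$. The inclusion $\mathcal{N}(Z^{\mathrm T}H^2Z)\subseteq\mathcal{N}(H_{11})$ always holds. Hence the condition $\mathcal{N}(Z^{\mathrm T}HZ)=\mathcal{N}(Z^{\mathrm T}H^2Z)$ is equivalent to $H_{12}^{\mathrm T}\mathcal{N}(H_{11})=0$, which in turn is equivalent to $\mathcal{R}(H_{12})\subseteq\mathcal{R}(H_{11})$ by symmetry of $H_{11}$.

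For necessity, suppose $H+\rho A^{\mathrm T}A\succeq0$ for some $\rho$. Its $(1,1)$ block is $H_{11}$. The generalized Schur complement criterion for PSD block matrices requires $\mathcal{R}(H_{12})\subseteq\mathcal{R}(H_{11})$, since a PSD matrix with $x^{\mathrm T}H_{11}x=0$ must annihilate $(x,0)$. This gives the null-space equality.

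For sufficiency, assume the range inclusion. Let $H_{11}^{\dagger}$ be the pseudoinverse. The PSD Schur criterion with a singular leading block says $H+\rho A^{\mathrm T}A\succeq0$ if and only if $H_{11}\succeq0$ (given), $\mathcal{R}(H_{12})\subseteq\mathcal{R}(H_{11})$ (given), and
\[
H_{22}+\rho M-H_{21}H_{11}^{\dagger}H_{12}\succeq0 .
\]
Because $M\succ0$, this last condition holds for all $\rho\ge\bar\rho:=\max\{0,\lambda_{\max}(M^{-1/2}(H_{21}H_{11}^{\dagger}H_{12}-H_{22})M^{-1/2})\}$. Taking $\bar\rho>0$ slightly larger if necessary gives the threshold. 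For singularity, pick $0\ne v\in\mathcal{N}(H_{11})$. Then $H_{12}^{\mathrm T}v=0$, so $(H+\rho A^{\mathrm T}A)Zv=ZH_{11}v+YH_{21}v=0$ for every $\rho$. The augmented matrix is therefore singular for all $\rho$, in particular for large $\rho$.

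The main obstacle is proving the generalized (pseudoinverse) Schur complement characterization of positive semidefiniteness, which I would establish directly. For sufficiency, write $H_{12}=H_{11}H_{11}^{\dagger}H_{12}$ and use the congruence by $\begin{pmatrix}I&-H_{11}^{\dagger}H_{12}\\0&I\end{pmatrix}$, which block-diagonalizes the matrix into $\operatorname{diag}(H_{11},\,\text{Schur complement})$. For necessity, test with $(x,ty)$ where $x\in\mathcal{N}(H_{11})$ and let $t\to0$: the quadratic form becomes $2t\,x^{\mathrm T}H_{12}y+O(t^2)$, and for this to stay nonnegative for both signs of $t$ we need $H_{12}^{\mathrm T}x=0$.
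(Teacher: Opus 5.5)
Your proof is correct and follows essentially the same route as the paper, which recovers this lemma as the case $\lambda_*=0$ of Theorem~\ref{thm:attainability} (Corollary~\ref{cor:definiteness_consequences}). The shared steps are: an orthogonal block decomposition along $\mathcal{N}(A)\oplus\mathcal{R}(A^{\mathrm T})$; the identity $Z^{\mathrm T}H^2Z=(HZ)^{\mathrm T}(HZ)$ to turn the kernel equality into $HZw=0$ on $\mathcal{N}(Z^{\mathrm T}HZ)$; a PSD null-vector argument for necessity; and the pseudoinverse Schur complement (Lemma~\ref{lem:generalized_schur}) with the positive definite penalty block for sufficiency. The only minor difference is how you get singularity: you use the explicit null vector $Zv$, which works for every $\rho$, whereas the paper reads singularity off from $f(\rho)=\lambda_*=0$ for $\rho\geq\bar\rho$.
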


These results characterize definiteness and finite positive-semidefinite
augmentation. However, they focus on qualitative properties and do not
provide a quantitative description of the associated penalty path.
In particular, the relation between
$\lambda_{\min}(Z^{\mathrm T}HZ)$ and
$\lambda_{\min}(H+\rho A^{\mathrm T}A)$, the conditions for finite
attainment of the limiting eigenvalue, and the convergence rate in the
nonattainable case require further analysis.

The problem is connected with several strands of optimization and
numerical linear algebra. Finsler-type results are closely related
to the S-lemma, projection lemmas, and semidefinite representations of
quadratic inequalities
\cite{PolikTerlaky07,BoydElGhaouiFeronBalakrishnan94,
VandenbergheBoyd96}.
In equality-constrained quadratic optimization,
reduced-space and saddle-point formulations provide classical
alternatives to explicit null-space reduction
\cite{GouldHribarNocedal01,Benzi2005}. Quadratic penalty and augmented
formulations are standard tools in constrained optimization
\cite{Hestenes69,Powell69,NocedalWright06}.

A more direct connection is linearly constrained Rayleigh quotient
optimization. Zhou, Bai, and Li~\cite{ZhouBaiLi21} developed theoretical
characterizations together with Krylov projection methods for such
problems. Our focus is different: rather than directly reformulating the
constrained Rayleigh quotient problem, we study the spectral penalty path
generated by homogeneous linear constraints. This leads to an exact penalty characterization of the constrained
eigenvalue, a criterion for finite attainment, and quantitative
asymptotics when finite attainment fails.

For large-scale symmetric eigenvalue computation, Lanczos and LOBPCG
are established iterative methods
\cite{Parlett1998,Saad11,Knyazev2001}. Recent optimization-based and
preconditioned approaches include EPIC and Riemannian preconditioning
\cite{ShaoChenBai25,ShaoChen25}. The Split--Merge method
\cite{LiuSongXia2026} provides a difference-based framework for dominant
eigenvalue computation and supports matrix-free operator evaluations.

In this paper, we study the penalty path
$\lambda_{\min}(H+\rho A^{\mathrm T}A)$ and its exact relation with
the constrained eigenvalue $\lambda_{\min}(Z^{\mathrm T}HZ)$. Our main
contributions are as follows:

\begin{itemize}

\item We establish an exact penalty characterization of the constrained
minimum eigenvalue as the supremum of a full-space penalized
eigenvalue path; by monotonicity, this supremum equals its
large-penalty limit. Three complementary proofs based on orthogonal
decomposition, Schur complement analysis, and semidefinite
programming duality are given.
Maximum-eigenvalue results and extensions to stacked linear
constraints are obtained by the same arguments.

\item We characterize finite
exact recovery and, when it fails, derive a first-order asymptotic
expansion with an explicit leading coefficient.
This distinguishes finite exact recovery from asymptotic recovery in the nonattainable case and provides the sensitivity information used for
penalty prediction.

\item We develop a matrix-free Penalty--Split--Merge (PSM) framework for successive constrained extremal eigenpairs.
The method combines penalty continuation based on the Hellmann--Feynman sensitivity relation, Split--Merge inner solves,
deflation, and projected certification.
It accesses $H$, $A$, and $A^{\mathrm T}$ through matrix-vector products and avoids explicit null-space bases and penalty matrices.

\item Numerical experiments illustrate the predicted penalty regimes
and first-order behavior and evaluate projected certification. PSM is
tested on moderate-scale benchmarks and large-scale matrix-free
instances with attainable solutions.

\end{itemize}

The remainder of the paper is organized as follows.
Section~\ref{sec:main_result} establishes the Projected Hessian
Quantification Theorem, together with its maximum-eigenvalue and
stacked-constraint extensions.
Section~\ref{sec:attainability_convergence} characterizes finite
attainability and derives the first-order asymptotic behavior.
Section~\ref{sec:penalty_sm} develops the matrix-free
Penalty--Split--Merge framework.
Section~\ref{sec:numerical_experiments} presents the numerical
experiments, and Section~\ref{sec:conclusion} concludes the paper.

Throughout the paper, all matrices and vectors are real.
$\mathbb{S}^n$ denotes the space of $n\times n$ real symmetric
matrices. The null space and range of a matrix $A$ are denoted by
$\mathcal{N}(A)$ and $\mathcal{R}(A)$, respectively.
The notation $\|\cdot\|$ denotes the Euclidean norm for vectors and
the spectral norm for matrices. For a symmetric matrix $M$,
$\lambda_{\min}(M)$ and $\lambda_{\max}(M)$ denote its smallest and
largest eigenvalues, respectively. The symbols $M\succ0$ and
$M\succeq0$ denote positive definiteness and positive
semidefiniteness, respectively.

\section{Projected Hessian quantification theorem and three proofs}
\label{sec:main_result}

This section establishes the main theoretical result. The
smallest eigenvalue of the Hessian restricted to the nullspace of the
constraint matrix admits an exact characterization through the smallest
eigenvalue of a penalty-augmented matrix in the full space.

\begin{theorem}[Projected Hessian Quantification Theorem]
\label{thm:main_duality}
Let $H\in\mathbb{S}^n$, let $A\in\mathbb{R}^{m\times n}$ have full
row rank with $m<n$, and let
$Z\in\mathbb{R}^{n\times(n-m)}$ have orthonormal columns spanning
$\mathcal{N}(A)$. Then
\begin{equation}
\label{eq:main_equality}
\lambda_{\min}(Z^{\mathrm{T}}HZ)
=
\sup_{\rho\in\mathbb{R}}
\lambda_{\min}\bigl(H+\rho A^{\mathrm{T}}A\bigr).
\end{equation}
\end{theorem}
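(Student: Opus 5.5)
We prove the two inequalities separately. Write $\mu:=\lambda_{\min}(Z^{\mathrm T}HZ)$ and $f(\rho):=\lambda_{\min}(H+\rho A^{\mathrm T}A)$.

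\emph{Upper bound.} Fix any $\rho\in\mathbb{R}$. Let $y$ be a unit eigenvector of $Z^{\mathrm T}HZ$ for $\mu$ and set $x=Zy$. Then $\|x\|=1$ and $Ax=0$. By the Rayleigh quotient characterization,
\[
f(\rho)\le x^{\mathrm T}(H+\rho A^{\mathrm T}A)x=y^{\mathrm T}Z^{\mathrm T}HZy=\mu .
\]
Hence $\sup_\rho f(\rho)\le\mu$. Note also that $f$ is nondecreasing in $\rho$, since $A^{\mathrm T}A\succeq0$. So the supremum equals $\lim_{\rho\to\infty}f(\rho)$, and it suffices to show $\liminf_{\rho\to\infty}f(\rho)\ge\mu$.

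\emph{Lower bound.} Here I would use the orthogonal decomposition $\mathbb{R}^n=\mathcal{N}(A)\oplus\mathcal{R}(A^{\mathrm T})$. Choose $Y$ with orthonormal columns spanning $\mathcal{R}(A^{\mathrm T})$, so that $Q=[Z\ Y]$ is orthogonal. In these coordinates,
\[
Q^{\mathrm T}(H+\rho A^{\mathrm T}A)Q=\begin{pmatrix}Z^{\mathrm T}HZ & Z^{\mathrm T}HY\\ Y^{\mathrm T}HZ & Y^{\mathrm T}HY+\rho\,Y^{\mathrm T}A^{\mathrm T}AY\end{pmatrix}.
\]
Because $AY$ is nonsingular, we have $Y^{\mathrm T}A^{\mathrm T}AY\succeq\sigma I$ with $\sigma=\sigma_{\min}(A)^2>0$.

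Fix $\varepsilon>0$. I would show that $M_\rho:=Q^{\mathrm T}(H+\rho A^{\mathrm T}A)Q-(\mu-\varepsilon)I\succeq0$ for all large $\rho$. The $(1,1)$ block satisfies $Z^{\mathrm T}HZ-(\mu-\varepsilon)I\succeq\varepsilon I\succ0$. By the Schur complement, $M_\rho\succeq0$ holds once
\[
Y^{\mathrm T}HY+\rho\,Y^{\mathrm T}A^{\mathrm T}AY-(\mu-\varepsilon)I-Y^{\mathrm T}HZ\bigl(Z^{\mathrm T}HZ-(\mu-\varepsilon)I\bigr)^{-1}Z^{\mathrm T}HY\succeq0 .
\]
The inverse has norm at most $1/\varepsilon$, so the subtracted terms are bounded by $C_\varepsilon:=\|H\|+|\mu|+\varepsilon+\|H\|^2/\varepsilon$. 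The condition therefore holds whenever $\rho\sigma\ge C_\varepsilon$.

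An alternative that avoids the Schur complement is the elementary bound $2|z^{\mathrm T}HYw|\le\varepsilon\|z\|^2+\|H\|^2\|w\|^2/\varepsilon$ applied to $x=Zz+Yw$. This gives $f(\rho)\ge\mu-\varepsilon$ for $\rho\ge C_\varepsilon/\sigma$. Letting $\varepsilon\downarrow0$ yields $\sup_\rho f(\rho)\ge\mu$.

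The only real obstacle is this lower bound, and specifically the cross term $Z^{\mathrm T}HY$. The $\varepsilon$-slack is what makes it absorbable: without it, the argument can fail to give exact attainment, consistent with Lemma~\ref{thm:AnstreicherWright}. Since the theorem only claims a supremum, which is a limit by monotonicity, the $\varepsilon$-argument suffices. The two remaining steps, the Rayleigh-quotient upper bound and the coordinate change, are routine.
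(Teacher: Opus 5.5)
Your proof is correct and is essentially the paper's second (Schur-complement) proof. You get the upper bound from the Rayleigh quotient on $\mathcal{N}(A)$ together with monotonicity of $f$. You then fix $\gamma=\mu-\varepsilon$ and use that the null-space block $Z^{\mathrm T}HZ-\gamma I\succeq\varepsilon I$ is positive definite to reduce $H+\rho A^{\mathrm T}A-\gamma I\succeq0$ to a Schur-complement condition. That condition eventually holds because $Y^{\mathrm T}A^{\mathrm T}AY\succ0$, and you finish by letting $\gamma\uparrow\mu$. Your explicit threshold $\rho\ge C_\varepsilon/\sigma$, and the Young-inequality variant, just make the paper's ``for all sufficiently large $\rho$'' quantitative.
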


In what follows, we use the notation
\[
f(\rho):=\lambda_{\min}\bigl(H+\rho A^{\mathrm{T}}A\bigr),
~~
\lambda_*:=\lambda_{\min}(Z^{\mathrm{T}}HZ).
\]
Thus Theorem~\ref{thm:main_duality} states that
$\lambda_*=\sup\limits_{\rho\in\mathbb{R}}f(\rho)$.
Three proofs are presented using complementary approaches:
orthogonal decomposition with compactness arguments, Schur complement
analysis, and semidefinite programming duality.

\subsection{Preliminaries and block decomposition}
\label{subsec:preliminary_lemmas}

We first recall the Rayleigh--Ritz variational characterization, which
will be used repeatedly; and then introduce the block decomposition
associated with $\mathcal{N}(A)$ and $\mathcal{R}(A^{\mathrm{T}})$.

\begin{theorem}[Rayleigh--Ritz Theorem \cite{Parlett1998,Strang2016}]
\label{thm:Rayleigh_Ritz}
Let $M\in\mathbb{S}^n$. Then
\begin{align}
\lambda_{\min}(M)
=
\min_{\lVert x\rVert=1}x^{\mathrm{T}}Mx,
~~
\lambda_{\max}(M)
=
\max_{\lVert x\rVert=1}x^{\mathrm{T}}Mx.
\label{eq:RR_std_max}
\end{align}
\end{theorem}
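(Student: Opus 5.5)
We prove the Rayleigh--Ritz characterization directly from the spectral theorem for real symmetric matrices. Since $M\in\mathbb{S}^n$, there is an orthogonal matrix $Q\in\mathbb{R}^{n\times n}$ and a real diagonal matrix $\Lambda=\mathrm{diag}(\lambda_1,\dots,\lambda_n)$, with $\lambda_1\le\cdots\le\lambda_n$, such that $M=Q\Lambda Q^{\mathrm T}$. Hence $\lambda_{\min}(M)=\lambda_1$ and $\lambda_{\max}(M)=\lambda_n$. The plan is to change variables $y=Q^{\mathrm T}x$, which reduces the quadratic form to a weighted sum of squares, and then bound that sum.

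First, because $Q$ is orthogonal, the map $x\mapsto y=Q^{\mathrm T}x$ is a bijection of the unit sphere onto itself: $\|y\|=\|x\|$. For $\|x\|=1$ we obtain
\[
x^{\mathrm T}Mx=y^{\mathrm T}\Lambda y=\sum_{i=1}^n\lambda_i y_i^2,
\qquad \sum_{i=1}^n y_i^2=1 .
\]
This is a convex combination of the eigenvalues with weights $y_i^2$. It therefore satisfies
\[
\lambda_1\le\sum_{i=1}^n\lambda_i y_i^2\le\lambda_n .
\]
So $\lambda_1$ is a lower bound for $x^{\mathrm T}Mx$ over the unit sphere, and $\lambda_n$ is an upper bound.

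Second, both bounds are attained. Take $x=Qe_1$, the unit eigenvector for $\lambda_1$, which corresponds to $y=e_1$. Then $x^{\mathrm T}Mx=\lambda_1$, so the infimum is a minimum equal to $\lambda_{\min}(M)$. Likewise, $x=Qe_n$ gives $x^{\mathrm T}Mx=\lambda_n$, so the supremum is a maximum equal to $\lambda_{\max}(M)$. Alternatively, one can note that $x\mapsto x^{\mathrm T}Mx$ is continuous on the compact unit sphere, so the extrema exist, and then identify their values with the bounds above.

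There is no real obstacle here. The only substantive ingredient is the spectral theorem, that is, the orthogonal diagonalizability of real symmetric matrices, which we take as standard. The one point that needs care is that the orthogonal change of variables preserves the constraint $\|x\|=1$. This is what makes the reduction to the diagonal case exact rather than merely a bound.
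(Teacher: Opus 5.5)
Your proof is correct and complete. The orthogonal change of variables $y=Q^{\mathrm T}x$ preserves the unit sphere and turns $x^{\mathrm T}Mx$ into a convex combination of the eigenvalues, and the bounds are attained at the extreme eigenvectors. The paper gives no proof of its own here; it cites the result as standard from Parlett and Strang, and your spectral-decomposition argument is the standard textbook proof of it.
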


We now separate the nullspace of $A$ from its orthogonal complement.

\begin{lemma}[Orthogonal Decomposition and Block Representation]
\label{lem:orthogonal_interlacing}
Let $A\in\mathbb{R}^{m\times n}$ have full row rank with $m<n$, and
let $H\in\mathbb{S}^n$. Let
$U\in\mathbb{R}^{n\times m}$ and
$Z\in\mathbb{R}^{n\times(n-m)}$ have orthonormal columns spanning
$\mathcal{R}(A^{\mathrm{T}})$ and $\mathcal{N}(A)$, respectively.
Define
\begin{equation}\label{QBEDC}
Q:=[U,Z],
~
\Sigma:=AU,
~
B:=U^{\mathrm{T}}HU,
~
E:=U^{\mathrm{T}}HZ,
~
D:=Z^{\mathrm{T}}HZ,
~
C:=\Sigma^{\mathrm{T}}\Sigma.
\end{equation}
Then the following statements hold.
\begin{enumerate}[leftmargin=2.8em]
\item[\textnormal{(i)}]
$Q$ is orthogonal, $\Sigma$ is nonsingular, and $C\succ0$.

\item[\textnormal{(ii)}]
Every $x\in\mathbb{R}^n$ has a unique decomposition
$
x=Uy+Zz,
~~
y\in\mathbb{R}^m,~~ z\in\mathbb{R}^{n-m}.
$
Moreover,
$
Ax=\Sigma y,
~~
\lVert x\rVert^2=\lVert y\rVert^2+\lVert z\rVert^2.
$

\item[\textnormal{(iii)}]
For $x=Uy+Zz$ and every $\rho\in\mathbb{R}$,
\begin{equation}
\label{eq:quadratic_decomposition}
x^{\mathrm{T}}
\bigl(H+\rho A^{\mathrm{T}}A\bigr)x
=
y^{\mathrm{T}}(B+\rho C)y
+
2y^{\mathrm{T}}Ez
+
z^{\mathrm{T}}Dz.
\end{equation}

\item[\textnormal{(iv)}]
The corresponding block representation is, for every
$\rho\in\mathbb{R}$,
\begin{equation}
\label{eq:block_penalty_matrix}
Q^{\mathrm{T}}
\bigl(H+\rho A^{\mathrm{T}}A\bigr)Q
=
\begin{bmatrix}
B+\rho C & E\\
E^{\mathrm{T}} & D
\end{bmatrix}.
\end{equation}

\item[\textnormal{(v)}]
The reduced-space variational identity is
\begin{equation}
\label{eq:nullspace_variational}
\lambda_{\min}(D)
=
\min_{\substack{x\in\mathcal{N}(A)\\ \lVert x\rVert=1}}
x^{\mathrm{T}}Hx
=
\lambda_*.
\end{equation}
\end{enumerate}
\end{lemma}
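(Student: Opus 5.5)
The plan is to verify the five items in order, using only the orthonormality of $U$ and $Z$ together with the complementarity $\mathcal{R}(A^{\mathrm T})=\mathcal{N}(A)^{\perp}$ and the Rayleigh--Ritz Theorem~\ref{thm:Rayleigh_Ritz}.

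For (i), I will use that $A$ has full row rank, so $\dim\mathcal{R}(A^{\mathrm T})=m$ and $\dim\mathcal{N}(A)=n-m$. These two subspaces are orthogonal complements in $\mathbb{R}^n$, so $U^{\mathrm T}Z=0$. Together with $U^{\mathrm T}U=I_m$ and $Z^{\mathrm T}Z=I_{n-m}$, this gives $Q^{\mathrm T}Q=I_n$, so $Q$ is orthogonal. For nonsingularity of $\Sigma=AU\in\mathbb{R}^{m\times m}$, suppose $AUy=0$. Then $Uy\in\mathcal{N}(A)\cap\mathcal{R}(A^{\mathrm T})=\{0\}$, and since $U$ has orthonormal columns, $y=0$. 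Hence $C=\Sigma^{\mathrm T}\Sigma\succ0$.

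For (ii), I will set $y=U^{\mathrm T}x$ and $z=Z^{\mathrm T}x$. Then $x=QQ^{\mathrm T}x=Uy+Zz$, and uniqueness follows from invertibility of $Q$. Since $AZ=0$, we get $Ax=AUy=\Sigma y$, and $\|x\|^2=\|Q^{\mathrm T}x\|^2=\|y\|^2+\|z\|^2$.

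For (iii) and (iv), I will expand $x^{\mathrm T}Hx$ blockwise using the definitions of $B,E,D$. The penalty term is $\rho\|Ax\|^2=\rho\,y^{\mathrm T}\Sigma^{\mathrm T}\Sigma y=\rho\,y^{\mathrm T}Cy$. Equivalently, $A^{\mathrm T}A Q=[A^{\mathrm T}\Sigma,\,0]$, so $Q^{\mathrm T}A^{\mathrm T}AQ=\operatorname{diag}(C,0)$, which yields \eqref{eq:block_penalty_matrix}. Here I use $U^{\mathrm T}A^{\mathrm T}=\Sigma^{\mathrm T}$ and $Z^{\mathrm T}A^{\mathrm T}=(AZ)^{\mathrm T}=0$.

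For (v), I will apply Theorem~\ref{thm:Rayleigh_Ritz} to $D$. The map $z\mapsto Zz$ is an isometric bijection from $\mathbb{R}^{n-m}$ onto $\mathcal{N}(A)$, and $(Zz)^{\mathrm T}H(Zz)=z^{\mathrm T}Dz$. Minimizing over unit $z$ is therefore the same as minimizing over unit $x\in\mathcal{N}(A)$, and the value is $\lambda_*$ by definition.

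None of these steps is a real obstacle. The only point needing care is the justification of $\mathcal{N}(A)\cap\mathcal{R}(A^{\mathrm T})=\{0\}$, which is the orthogonal-complement identity, and that identity underlies both the orthogonality of $Q$ and the invertibility of $\Sigma$.
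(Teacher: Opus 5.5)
Your proposal is correct and follows essentially the same route as the paper. Orthogonality of $\mathcal{R}(A^{\mathrm T})$ and $\mathcal{N}(A)$ makes $Q$ orthogonal and, via $\mathcal{N}(A)\cap\mathcal{R}(A^{\mathrm T})=\{0\}$, makes $\Sigma$ nonsingular; the coordinates $y=U^{\mathrm T}x$, $z=Z^{\mathrm T}x$ together with $Q^{\mathrm T}A^{\mathrm T}AQ=\operatorname{diag}(C,0)$ give (ii)--(iv); and Rayleigh--Ritz applied to $D$ through the isometry $z\mapsto Zz$ gives (v).
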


\begin{proof}
Since $A$ has full row rank,
$\mathcal{R}(A^{\mathrm T})=\mathcal{N}(A)^\perp$.
Therefore, $U$ and $Z$ span orthogonal complementary subspaces,
and $Q=[U,Z]$ is orthogonal.
If $\Sigma y=0$, then
$
Uy\in\mathcal{N}(A)\cap\mathcal{R}(A^{\mathrm{T}})=\{0\}.
$
Since $U$ has orthonormal columns, $y=0$. Hence $\Sigma$ is
nonsingular, and therefore $C=\Sigma^{\mathrm{T}}\Sigma\succ0$.
This proves part~\textnormal{(i)}.

Since $Q$ is orthogonal, every $x\in\mathbb{R}^n$ has the unique
representation
$
x=Uy+Zz,
~~
y=U^{\mathrm{T}}x,~~ z=Z^{\mathrm{T}}x.
$
Using $AZ=0$ and $AU=\Sigma$, we obtain $Ax=\Sigma y$.
Orthogonality also gives
$
\lVert x\rVert^2=\lVert y\rVert^2+\lVert z\rVert^2.
$
This proves part~\textnormal{(ii)}.

Expanding the quadratic form gives
\eqref{eq:quadratic_decomposition}.
$
Q^{\mathrm{T}}A^{\mathrm{T}}AQ
=
\begin{bmatrix}
C & 0\\
0 & 0
\end{bmatrix}
$
yields the block representation
\eqref{eq:block_penalty_matrix}.
Finally, every unit vector $x\in\mathcal{N}(A)$ can be written uniquely
as $x=Zz$ with $\lVert z\rVert=1$.
Therefore, part~\textnormal{(v)} follows by the Rayleigh--Ritz
characterization in Theorem~\ref{thm:Rayleigh_Ritz}.
\end{proof}

The next lemma records the basic properties of the penalty path.

\begin{lemma}[Basic Spectral Properties of the Penalty Function]
\label{lem:spectral_bounds}
Under the assumptions of Theorem~\ref{thm:main_duality}, the function
$f$ is globally Lipschitz continuous, concave, and nondecreasing on
$\mathbb{R}$. Moreover,
$
f(\rho)\leq\lambda_*
~~
\text{for every }\rho\in\mathbb{R}.
$
\end{lemma}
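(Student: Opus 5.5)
All four properties will be read off from the Rayleigh--Ritz characterization in Theorem~\ref{thm:Rayleigh_Ritz}. The key device is to write $f$ as a pointwise minimum of affine functions of $\rho$. For each unit vector $x$, define
\[
g_x(\rho):=x^{\mathrm T}Hx+\rho\,\lVert Ax\rVert^2 .
\]
Then $f(\rho)=\min_{\lVert x\rVert=1}g_x(\rho)$, and the minimum is attained because the unit sphere is compact and $g_x(\rho)$ is continuous in $x$.

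\textbf{Concavity.} Each $g_x$ is affine in $\rho$, and a pointwise infimum of affine functions is concave. Hence $f$ is concave on $\mathbb{R}$.

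\textbf{Monotonicity.} The slope of each $g_x$ is $\lVert Ax\rVert^2\ge0$, so every $g_x$ is nondecreasing. Concretely, if $\rho_1\le\rho_2$, then $g_x(\rho_1)\le g_x(\rho_2)$ for every unit $x$. Taking the minimum over $x$ on both sides gives $f(\rho_1)\le f(\rho_2)$.

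\textbf{Global Lipschitz continuity.} The slopes are uniformly bounded: $0\le\lVert Ax\rVert^2\le\lVert A\rVert^2$ for all unit $x$. Therefore
\[
\lvert g_x(\rho_1)-g_x(\rho_2)\rvert\le\lVert A\rVert^2\lvert\rho_1-\rho_2\rvert
\]
uniformly in $x$. For two functions bounded below, the infima differ by at most the uniform bound on their difference. This yields
\[
\lvert f(\rho_1)-f(\rho_2)\rvert\le\lVert A\rVert^2\lvert\rho_1-\rho_2\rvert .
\]
An alternative route is Weyl's inequality applied to the perturbation $(\rho_2-\rho_1)A^{\mathrm T}A$, whose spectral norm is $\lvert\rho_2-\rho_1\rvert\,\lVert A\rVert^2$.

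\textbf{The bound $f(\rho)\le\lambda_*$.} Restrict the minimum to the smaller set of unit vectors in $\mathcal{N}(A)$. On that set the penalty term vanishes, so $g_x(\rho)=x^{\mathrm T}Hx$. Since restricting a minimization can only raise its value,
\[
f(\rho)\le\min_{x\in\mathcal{N}(A),\,\lVert x\rVert=1}x^{\mathrm T}Hx=\lambda_* ,
\]
where the last equality is Lemma~\ref{lem:orthogonal_interlacing}(v).

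No step is a genuine obstacle. The only point needing care is justifying the step from "uniform Lipschitz bound on each $g_x$" to "Lipschitz bound on their minimum." I would handle it directly: let $x_2$ be a minimizer for $\rho_2$. Then
\[
f(\rho_1)\le g_{x_2}(\rho_1)\le g_{x_2}(\rho_2)+\lVert A\rVert^2\lvert\rho_1-\rho_2\rvert=f(\rho_2)+\lVert A\rVert^2\lvert\rho_1-\rho_2\rvert .
\]
The reverse inequality follows by exchanging the roles of $\rho_1$ and $\rho_2$.
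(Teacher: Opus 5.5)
Your proof is correct and follows essentially the same route as the paper: both use the Rayleigh--Ritz representation of $f$ as a minimum of affine, nondecreasing functions of $\rho$ to get concavity and monotonicity, and both restrict the minimization to unit vectors in $\mathcal{N}(A)$ to get $f(\rho)\leq\lambda_*$. The only difference is that the paper gets the Lipschitz bound directly from Weyl's inequality, which is your stated alternative, whereas your minimizer-exchange argument is a self-contained derivation of the same constant $\lVert A\rVert^2=\lVert A^{\mathrm T}A\rVert$.
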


\begin{proof}
By Theorem~\ref{thm:Rayleigh_Ritz},
$
f(\rho)
=
\min\limits_{\lVert x\rVert=1}
\left(
x^{\mathrm{T}}Hx+\rho\lVert Ax\rVert^2
\right).
$
For each fixed unit vector $x$, $x^{\mathrm{T}}Hx+\rho\lVert Ax\rVert^2$ is affine and nondecreasing in $\rho$. Hence $f$ is concave and nondecreasing.
For any $\rho_1,\rho_2\in\mathbb{R}$, Weyl's perturbation inequality
gives
$
\lvert f(\rho_2)-f(\rho_1)\rvert
\leq
\lvert\rho_2-\rho_1\rvert
\lVert A^{\mathrm{T}}A\rVert.
$
Thus $f$ is globally Lipschitz continuous.
Finally,
\[
\begin{aligned}
f(\rho)
=
\min_{\lVert x\rVert=1}
x^{\mathrm{T}}\bigl(H+\rho A^{\mathrm{T}}A\bigr)x
\leq
\min_{\substack{x\in\mathcal{N}(A)\\ \lVert x\rVert=1}}
x^{\mathrm{T}}\bigl(H+\rho A^{\mathrm{T}}A\bigr)x
=
\min_{\substack{x\in\mathcal{N}(A)\\ \lVert x\rVert=1}}
x^{\mathrm{T}}Hx
=
\lambda_*.
\end{aligned}
\]
\end{proof}

\subsection{Proof via orthogonal decomposition}
\label{subsec:orthogonal_proof}

The first proof combines the orthogonal decomposition in
Lemma~\ref{lem:orthogonal_interlacing} with a compactness argument. The
penalty term forces minimizing vectors toward $\mathcal{N}(A)$ as
$\rho\to+\infty$. Since Lemma~\ref{lem:spectral_bounds} has already given
$f(\rho)\leq\lambda_*$, it remains to establish the matching lower
bound for the supremum.

\begin{proof}[Proof of Theorem~\ref{thm:main_duality} via Orthogonal Decomposition]
By Lemma~\ref{lem:spectral_bounds}, $f$ is nondecreasing and
$f(\rho)\leq\lambda_*$ for every $\rho\in\mathbb{R}$. Hence
$L:=\lim\limits_{\rho\to+\infty}f(\rho)$ exists and satisfies
$L\leq\lambda_*$. Suppose, for contradiction, that $L<\lambda_*$.
Choose $\epsilon>0$ such that
$L+2\epsilon<\lambda_*$, and select a sequence
$\{\rho_j\}$ satisfying $\rho_j\to+\infty$.
Then, for all sufficiently large $j$,
$f(\rho_j)\leq L+\epsilon\leq\lambda_*-\epsilon$.

By Theorem~\ref{thm:Rayleigh_Ritz}, choose a unit eigenvector $x_j$
associated with $f(\rho_j)$, so that
$f(\rho_j)=x_j^{\mathrm T}Hx_j+\rho_j\|Ax_j\|^2$.
Using Lemma~\ref{lem:orthogonal_interlacing}, write
$x_j=Uy_j+Zz_j$, where
$\|y_j\|^2+\|z_j\|^2=1$. Let
$\sigma:=\lambda_{\min}(C)>0$. Since $Ax_j=\Sigma y_j$,
$\|Ax_j\|^2=y_j^{\mathrm T}Cy_j\geq\sigma\|y_j\|^2$, while
$x_j^{\mathrm T}Hx_j\geq\lambda_{\min}(H)$. Therefore, for all
sufficiently large $j$,
\[
0\leq
\rho_j\sigma\|y_j\|^2
\leq
\lambda_*-\lambda_{\min}(H).
\]
Since $\rho_j\to+\infty$, it follows that $y_j\to0$ and hence
$\|z_j\|\to1$.

Since $\{z_j\}$ is bounded, there exists a subsequence, still denoted
by $\{z_j\}$, such that $z_j\to z_*$.
Since $\|z_j\|\to1$, we have
$\|z_*\|=1$. Along the same subsequence,
$x_j=Uy_j+Zz_j\to Zz_*$, and hence
\[
x_j^{\mathrm T}Hx_j
\longrightarrow
z_*^{\mathrm T}Dz_*
\geq
\lambda_{\min}(D)
=
\lambda_*.
\]
Since $\rho_j\to+\infty$, we have $\rho_j\geq0$ for all
sufficiently large $j$. Hence, by the Rayleigh representation,
\[
f(\rho_j)
=
x_j^{\mathrm T}Hx_j+\rho_j\|Ax_j\|^2
\geq
x_j^{\mathrm T}Hx_j .
\]
Along this subsequence,
\[
\liminf_{j\to\infty} f(\rho_j)\geq\lambda_*,
\]
which contradicts the bound
$f(\rho_j)\leq\lambda_*-\epsilon$ for sufficiently large $j$.
Therefore, $L=\lambda_*$.
Since
$f$ is nondecreasing,
$
\sup\limits_{\rho\in\mathbb{R}}f(\rho)=L=\lambda_*,
$
which proves \eqref{eq:main_equality}.
\end{proof}

\subsection{Proof via Schur complement}
\label{subsec:schur_proof}

The second proof uses the block representation in
Lemma~\ref{lem:orthogonal_interlacing} together with Schur complement. Rather than tracking minimizing vectors, it converts the
desired spectral lower bound into a positive-semidefiniteness condition
for sufficiently large $\rho$.
We recall the standard Schur complement and its positive-semidefinite
criterion; see \cite[Chap.~7]{HornJohnson12}.

\begin{lemma}[Schur Complement and Positive-Semidefinite Criterion]
\label{lem:schur_psd_criterion}
Let
$M=\begin{bmatrix}P&Y\\R&S\end{bmatrix}$, where
$P\in\mathbb{R}^{n\times n}$,
$Y\in\mathbb{R}^{n\times m}$,
$R\in\mathbb{R}^{m\times n}$, and
$S\in\mathbb{R}^{m\times m}$ is nonsingular.
Its Schur complement with respect to $S$ is
$M/S:=P-YS^{-1}R$.
If, in addition, $P\in\mathbb{S}^n$, $S\in\mathbb{S}^m$,
$R=Y^{\mathrm T}$, and $S\succ0$, then
\[
\begin{bmatrix}
P & Y\\
Y^{\mathrm T} & S
\end{bmatrix}
\succeq0
\quad\Longleftrightarrow\quad
P-YS^{-1}Y^{\mathrm T}\succeq0.
\]
\end{lemma}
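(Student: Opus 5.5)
This is the standard Schur complement criterion, so the plan is a congruence argument. I would first record that when $S$ is nonsingular the block matrix admits the factorization
\[
\begin{bmatrix}
P & Y\\
R & S
\end{bmatrix}
=
\begin{bmatrix}
I & YS^{-1}\\
0 & I
\end{bmatrix}
\begin{bmatrix}
P-YS^{-1}R & 0\\
0 & S
\end{bmatrix}
\begin{bmatrix}
I & 0\\
S^{-1}R & I
\end{bmatrix}.
\]
This is verified by multiplying out the three factors. It also shows that $M/S$ is the natural quantity that appears after block elimination.

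In the symmetric case $R=Y^{\mathrm T}$, $S\in\mathbb{S}^m$, the two outer factors are transposes of each other, since $(S^{-1})^{\mathrm T}=S^{-1}$. Setting
\[
T:=\begin{bmatrix} I & 0\\ -S^{-1}Y^{\mathrm T} & I\end{bmatrix},
\]
which is nonsingular with unit block-triangular structure, the identity becomes the congruence
\[
T^{\mathrm T} M T=\operatorname{diag}(P-YS^{-1}Y^{\mathrm T},\,S).
\]
Congruence by a nonsingular matrix preserves positive semidefiniteness in both directions, because $x^{\mathrm T}Mx=w^{\mathrm T}(T^{\mathrm T}MT)w$ with $x=Tw$ and $T$ bijective. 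Hence $M\succeq0$ holds if and only if the block-diagonal matrix is positive semidefinite. Since $S\succ0$, that in turn holds if and only if $P-YS^{-1}Y^{\mathrm T}\succeq0$.

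There is no real obstacle here. The only care needed is checking the sign convention in $T$, so that the off-diagonal blocks cancel exactly. As a sanity check for the forward direction, one can test $M$ on the vectors $(u,-S^{-1}Y^{\mathrm T}u)$, which gives $u^{\mathrm T}(P-YS^{-1}Y^{\mathrm T})u\geq0$ directly. Alternatively, since the paper cites \cite[Chap.~7]{HornJohnson12}, one could simply invoke that reference.
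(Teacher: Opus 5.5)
Your proposal is correct: the block factorization, the sign in $T$, the congruence step and the final use of $S\succ0$ all check out. The paper gives no proof of this lemma (it cites Horn--Johnson), but your argument is the same block congruence the paper uses to prove the generalized version, Lemma~\ref{lem:generalized_schur}, in the special case of nonsingular $S$, where $S^\dagger=S^{-1}$ and the range condition holds automatically.
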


\begin{proof}[Proof of Theorem~\ref{thm:main_duality} via Schur Complement]
With the notation of Lemma~\ref{lem:orthogonal_interlacing}, consider
the block representation \eqref{eq:block_penalty_matrix}. By
Lemma~\ref{lem:spectral_bounds},
$f(\rho)\leq\lambda_{\min}(D)=\lambda_*$ for every
$\rho\in\mathbb{R}$. It remains to establish the lower bound.

Fix $\gamma<\lambda_{\min}(D)$. Then
$D-\gamma I_{n-m}\succ0$, and the orthogonal transformation by $Q$ gives
\[
Q^{\mathrm T}
\bigl(H+\rho A^{\mathrm T}A-\gamma I_n\bigr)Q
=
\begin{bmatrix}
B-\gamma I_m+\rho C & E\\
E^{\mathrm T} & D-\gamma I_{n-m}
\end{bmatrix}.
\]
By Lemma~\ref{lem:schur_psd_criterion}, this matrix is positive
semidefinite if and only if
\begin{equation}\label{Krho}
B-\gamma I_m+\rho C
-
E(D-\gamma I_{n-m})^{-1}E^{\mathrm T}
\succeq0.
\end{equation}
Define
$K_\gamma:=B-\gamma I_m
-E(D-\gamma I_{n-m})^{-1}E^{\mathrm T}$.
The condition \eqref{Krho} becomes $K_\gamma+\rho C\succeq0$. Since $C\succ0$,
\[
\lambda_{\min}(K_\gamma+\rho C)
\geq
\lambda_{\min}(K_\gamma)
+\rho\lambda_{\min}(C),
\]
and hence $K_\gamma+\rho C\succeq0$ for all sufficiently large
$\rho>0$.

Because $Q$ is orthogonal,
$H+\rho A^{\mathrm T}A-\gamma I_n\succeq0$, therefore
$f(\rho)\geq\gamma$ for all sufficiently large $\rho$.
Thus
$\sup\limits_{\rho\in\mathbb{R}}f(\rho)\geq\gamma$.
Since this holds for every $\gamma<\lambda_*$, letting
$\gamma\uparrow\lambda_*$ gives
$\sup\limits_{\rho\in\mathbb{R}}f(\rho)\geq\lambda_*$.
Together with $f(\rho)\leq\lambda_*$ for every $\rho\in\mathbb{R}$,
we obtain
$
\sup\limits_{\rho\in\mathbb{R}}f(\rho)=\lambda_*,
$
which proves \eqref{eq:main_equality}.
\end{proof}

Augmented block formulations and their Schur-complement reductions are
standard in the saddle-point literature; see, e.g., \cite{Benzi2005}.
They yield the following feasibility representations.

\begin{corollary}[Schur-Complement Feasibility Representations]
\label{cor:schur_representation}
Under the assumptions of Theorem~\ref{thm:main_duality}, for
$\rho>0$, define
\[
\mathcal{M}_{+}(\rho)
:=
\begin{bmatrix}
H & A^{\mathrm T}\\
A & -\dfrac{1}{\rho}I_m
\end{bmatrix},
~~
\mathcal{M}_{-}(\rho)
:=
\begin{bmatrix}
H & -A^{\mathrm T}\\
-A & \dfrac{1}{\rho}I_m
\end{bmatrix}.
\]
Their Schur complements with respect to the lower-right blocks are
$H+\rho A^{\mathrm T}A$ and $H-\rho A^{\mathrm T}A$,
respectively. Moreover,
\begin{align}
\lambda_{\min}(Z^{\mathrm T}HZ)
&=
\sup\left\{
\gamma\in\mathbb{R}:
\exists\,\rho>0,\
H+\rho A^{\mathrm T}A\succeq\gamma I_n
\right\},
\label{eq:schur_min_feasibility}
\\
\lambda_{\max}(Z^{\mathrm T}HZ)
&=
\inf\left\{
\eta\in\mathbb{R}:
\exists\,\rho>0,\
H-\rho A^{\mathrm T}A\preceq\eta I_n
\right\}.
\label{eq:schur_max_feasibility}
\end{align}
\end{corollary}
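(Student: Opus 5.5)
The plan has three parts: compute the two Schur complements, rewrite the supremum in Theorem~\ref{thm:main_duality} as a feasibility supremum, and obtain the maximum-eigenvalue version by applying the minimum case to $-H$.

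\textbf{Schur complements.} For $\rho>0$ the lower-right block $S=-\rho^{-1}I_m$ of $\mathcal{M}_{+}(\rho)$ is nonsingular, so the definition in Lemma~\ref{lem:schur_psd_criterion} gives
\[
\mathcal{M}_{+}(\rho)/S=H-A^{\mathrm T}(-\rho^{-1}I_m)^{-1}A=H+\rho A^{\mathrm T}A .
\]
Similarly, $\mathcal{M}_{-}(\rho)$ has $S=\rho^{-1}I_m$, and
\[
\mathcal{M}_{-}(\rho)/S=H-(-A^{\mathrm T})(\rho I_m)(-A)=H-\rho A^{\mathrm T}A .
\]
Only the general definition is needed here, not the positive-semidefinite criterion.

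\textbf{Minimum-eigenvalue identity.} For fixed $\rho$, the condition $H+\rho A^{\mathrm T}A\succeq\gamma I_n$ holds exactly when $\gamma\le f(\rho)$. Hence the feasible set in \eqref{eq:schur_min_feasibility} equals $\bigcup_{\rho>0}(-\infty,f(\rho)]$, and its supremum is $\sup_{\rho>0}f(\rho)$. By Lemma~\ref{lem:spectral_bounds}, $f$ is nondecreasing, so $\sup_{\rho>0}f(\rho)=\sup_{\rho\in\mathbb{R}}f(\rho)$. Theorem~\ref{thm:main_duality} then identifies this value with $\lambda_*$, which proves \eqref{eq:schur_min_feasibility}. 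The second proof of that theorem already gives the positive-semidefinite inequality directly for large $\rho>0$, so no further argument is required.

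\textbf{Maximum-eigenvalue identity.} Apply \eqref{eq:schur_min_feasibility} to $-H$, which is still symmetric while $A$ and $Z$ are unchanged. Since $\lambda_{\max}(Z^{\mathrm T}HZ)=-\lambda_{\min}(Z^{\mathrm T}(-H)Z)$, we get
\[
\lambda_{\max}(Z^{\mathrm T}HZ)=-\sup\{\gamma:\exists\,\rho>0,\ -H+\rho A^{\mathrm T}A\succeq\gamma I_n\}.
\]
Put $\eta=-\gamma$. The constraint $-H+\rho A^{\mathrm T}A\succeq\gamma I_n$ is equivalent to $H-\rho A^{\mathrm T}A\preceq\eta I_n$, and $-\sup$ over $\gamma$ becomes $\inf$ over $\eta$. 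This is exactly \eqref{eq:schur_max_feasibility}.

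There is no real obstacle; the whole argument reduces to Theorem~\ref{thm:main_duality}. The one point to handle carefully is the restriction to $\rho>0$ in the feasibility sets, and monotonicity of $f$ from Lemma~\ref{lem:spectral_bounds} disposes of it.
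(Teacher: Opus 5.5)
Your proposal is correct and follows essentially the same route as the paper: restrict to $\rho>0$ using the monotonicity of $f$, translate $\lambda_{\min}(H+\rho A^{\mathrm T}A)\geq\gamma$ into the matrix inequality, and pass to $-H$ for the maximum case. The only differences are minor: you verify the two Schur complements explicitly, which the paper states without proof, and you apply the already-proved feasibility identity to $-H$ instead of applying Theorem~\ref{thm:main_duality} to $-H$ and redoing the equivalence.
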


\begin{proof}
Since $f$ is nondecreasing,
$
\sup\limits_{\rho\in\mathbb{R}} f(\rho)
=
\sup\limits_{\rho>0} f(\rho).
$
Hence Theorem~\ref{thm:main_duality} gives
\[
\lambda_{\min}(Z^{\mathrm T}HZ)
=
\sup_{\rho>0}
\lambda_{\min}\bigl(H+\rho A^{\mathrm T}A\bigr).
\]
For fixed $\rho>0$,
$\lambda_{\min}(H+\rho A^{\mathrm T}A)\geq\gamma \Longleftrightarrow H+\rho A^{\mathrm T}A\succeq\gamma I_n$, proving
\eqref{eq:schur_min_feasibility}.

Applying Theorem~\ref{thm:main_duality} to $-H$ and using the monotonicity similarly gives
\[
\lambda_{\max}(Z^{\mathrm T}HZ)
=
\inf_{\rho>0}
\lambda_{\max}\bigl(H-\rho A^{\mathrm T}A\bigr).
\]
For fixed $\rho>0$,
$\lambda_{\max}(H-\rho A^{\mathrm T}A)\leq\eta \Longleftrightarrow H-\rho A^{\mathrm T}A\preceq\eta I_n$, proving
\eqref{eq:schur_max_feasibility}.
\end{proof}

Thus $\mathcal{M}_{+}(\rho)$ and $\mathcal{M}_{-}(\rho)$ encode the
feasibility conditions through their Schur complements rather than
through their own spectra.

\subsection{Proof via semidefinite programming duality}
\label{subsec:optimization_proof}

The third proof uses semidefinite programming duality. Introducing
$X=xx^{\mathrm T}$ gives a rank-one semidefinite formulation; dropping
the rank constraint yields an SDP relaxation that is exact in optimal
value. The dual variable associated with
$\langle A^{\mathrm T}A,X\rangle=0$ is precisely the penalty parameter
$\rho$.

Consider
\begin{equation}
\label{eq:primal_problem_original}
p^*
=
\min\left\{
x^{\mathrm T}Hx:
Ax=0,\ \|x\|=1
\right\}.
\end{equation}
By Theorem~\ref{thm:Rayleigh_Ritz},
$p^*=\lambda_{\min}(Z^{\mathrm T}HZ)=\lambda_*$.
For matrices of the same size, let
$\langle M,N\rangle:=\operatorname{Tr}(M^{\mathrm T}N)$.
Under $X=xx^{\mathrm T}$, the objective and constraints in
\eqref{eq:primal_problem_original} become
$\langle H,X\rangle$,
$\langle A^{\mathrm T}A,X\rangle=0$, and
$\langle I_n,X\rangle=1$, together with
$X\succeq0$ and $\operatorname{rank}(X)=1$.
Dropping the rank constraint gives
\begin{equation}
\label{eq:primal_sdp}
\begin{array}{cl}
\displaystyle\min_{X\in\mathbb{S}^n}
&
\langle H,X\rangle
\\
\textnormal{s.t.}
&
\langle A^{\mathrm T}A,X\rangle=0,\\
&
\langle I_n,X\rangle=1,\\
&
X\succeq0.
\end{array}
\end{equation}
Let $p_{\rm sdp}^*$ denote the optimal value of \eqref{eq:primal_sdp}.

The relaxation is exact. Indeed, if $X$ is feasible, then
$AXA^{\mathrm T}\succeq0$ and
\[
\operatorname{Tr}(AXA^{\mathrm T})
=
\langle A^{\mathrm T}A,X\rangle
=
0,
\]
so $AX^{1/2}=0$. Hence
$\mathcal{R}(X)\subseteq\mathcal{N}(A)=\mathcal{R}(Z)$ and
$X=ZYZ^{\mathrm T}$ for some $Y\succeq0$ with
$\operatorname{Tr}(Y)=1$. Conversely, every such $Y$ produces a
feasible $X$. Therefore
\begin{equation}
\label{eq:exact_sdp_value}
p_{\rm sdp}^*
=
\min_{\substack{Y\succeq0\\ \operatorname{Tr}(Y)=1}}
\langle Z^{\mathrm T}HZ,Y\rangle
=
\lambda_*
=
p^*.
\end{equation}

Introduce unrestricted multipliers $\rho,\mu\in\mathbb{R}$ for the two
equality constraints in \eqref{eq:primal_sdp}. The Lagrangian is
\begin{equation}
\label{eq:lagrangian}
\mathcal{L}(X,\rho,\mu)
=
\left\langle
H+\rho A^{\mathrm T}A-\mu I_n,X
\right\rangle+\mu.
\end{equation}
The dual function is
\begin{equation}
\label{eq:dual_function}
g(\rho,\mu)
:=
\inf_{X\succeq0}\mathcal{L}(X,\rho,\mu)
=
\begin{cases}
\mu,
&
H+\rho A^{\mathrm T}A-\mu I_n\succeq0,
\\[1mm]
-\infty,
&
\textnormal{otherwise}.
\end{cases}
\end{equation}
Hence the dual problem is \begin{equation} \label{eq:dual_sdp} \begin{array}{cl} \displaystyle\max_{\rho,\mu\in\mathbb{R}} & \mu \\ \textnormal{s.t.} & H+\rho A^{\mathrm T}A\succeq\mu I_n. \end{array} \end{equation} For fixed $\rho$, the largest feasible $\mu$ is $f(\rho)=\lambda_{\min}(H+\rho A^{\mathrm T}A)$. Thus \begin{equation} \label{eq:simplified_dual} d^* = \sup_{\rho\in\mathbb{R}}f(\rho). \end{equation}

\begin{proof}[Proof of Theorem~\ref{thm:main_duality} via Duality]
The primal SDP \eqref{eq:primal_sdp} is feasible and has finite
optimal value. Its dual is strictly feasible: taking $\rho=0$ and any
$\mu<\lambda_{\min}(H)$ gives $H-\mu I_n\succ0$. Hence the dual Slater
condition yields strong duality, so $p_{\rm sdp}^*=d^*$. Combining
\eqref{eq:exact_sdp_value} and \eqref{eq:simplified_dual} gives
$
\lambda_*
=
\sup\limits_{\rho\in\mathbb{R}}f(\rho),
$
which proves \eqref{eq:main_equality}.
\end{proof}

The duality argument also clarifies the role of the penalty parameter.
In the SDP, $\rho$ is the unrestricted multiplier associated with
$\langle A^{\mathrm T}A,X\rangle=0$. For $X\succeq0$, this constraint
implies
$\mathcal{R}(X)\subseteq\mathcal{N}(A)$.
Thus the full-space penalty representation provides a dual
interpretation of the reduced-space eigenvalue problem.

\begin{remark}
\label{rem:monotonicity_consequences}
The three proofs emphasize complementary aspects of the same identity:
the orthogonal-decomposition proof is geometric, the Schur-complement
proof is matrix analytic, and the semidefinite-programming proof is
optimization theoretic.
By the monotonicity of
$f(\rho)=\lambda_{\min}(H+\rho A^{\mathrm T}A)$ and
Theorem~\ref{thm:main_duality},
\[
\sup_{\rho\in\mathbb{R}}f(\rho)
=
\sup_{\rho\geq0}f(\rho)
=
\lim_{\rho\to+\infty}f(\rho)
=
\lambda_*.
\]
Moreover, once the supremum is attained at some finite $\rho^*$, it is
attained for all $\rho\geq\rho^*$. This observation underlies the
attainable regime characterized in
Theorem~\ref{thm:attainability}.
\end{remark}

\subsection{Maximum eigenvalues and stacked constraints}
\label{subsec:extensions_boundaries}

The corresponding maximum-eigenvalue result follows by applying
Theorem~\ref{thm:main_duality} to $-H$.

\begin{corollary}[Maximum-Eigenvalue Version]
\label{cor:max_dual}
Under the assumptions of Theorem~\ref{thm:main_duality},
\[
\lambda_{\max}(Z^{\mathrm T}HZ)
=
\inf_{\rho\in\mathbb{R}}
\lambda_{\max}\bigl(H-\rho A^{\mathrm T}A\bigr).
\]
\end{corollary}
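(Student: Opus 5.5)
The plan is to apply Theorem~\ref{thm:main_duality} to $-H$ and then translate minimum eigenvalues into maximum eigenvalues by negation. The data $A$ and $Z$ are unchanged, and $-H\in\mathbb{S}^n$, so the hypotheses of Theorem~\ref{thm:main_duality} hold verbatim for the pair $(-H,A)$. This gives
\[
\lambda_{\min}\bigl(Z^{\mathrm T}(-H)Z\bigr)
=
\sup_{\rho\in\mathbb{R}}
\lambda_{\min}\bigl(-H+\rho A^{\mathrm T}A\bigr).
\]

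Next I would use the elementary identity $\lambda_{\min}(-M)=-\lambda_{\max}(M)$ for $M\in\mathbb{S}^n$. It follows directly from Theorem~\ref{thm:Rayleigh_Ritz}, since $\min_{\|x\|=1}x^{\mathrm T}(-M)x=-\max_{\|x\|=1}x^{\mathrm T}Mx$. On the left, take $M=Z^{\mathrm T}HZ$, so the left side equals $-\lambda_{\max}(Z^{\mathrm T}HZ)$. On the right, take $M=H-\rho A^{\mathrm T}A$; then $-M=-H+\rho A^{\mathrm T}A$, and each term equals $-\lambda_{\max}(H-\rho A^{\mathrm T}A)$.

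Finally, $\sup_\rho(-g(\rho))=-\inf_\rho g(\rho)$ holds for any real-valued $g$, including in the extended sense. Multiplying by $-1$ then gives the claimed identity. The infimum is finite because the supremum in Theorem~\ref{thm:main_duality} equals the finite number $\lambda_{\min}(-Z^{\mathrm T}HZ)$.

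There is no real obstacle here; the only points to check are that the sign conventions line up, namely that $-\rho$ inside corresponds to the same parameter $\rho$ ranging over all of $\mathbb{R}$, and that the hypotheses are invariant under $H\mapsto -H$. One could also note, via Lemma~\ref{lem:spectral_bounds} applied to $-H$, that $\rho\mapsto\lambda_{\max}(H-\rho A^{\mathrm T}A)$ is nonincreasing. The infimum is therefore the limit as $\rho\to+\infty$, although this is not needed for the stated equality.
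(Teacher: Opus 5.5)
Your proposal is correct and is essentially the paper's argument: the paper also obtains this corollary by applying Theorem~\ref{thm:main_duality} to $-H$. Your sign bookkeeping, via $\lambda_{\min}(-M)=-\lambda_{\max}(M)$ and $\sup_\rho(-g(\rho))=-\inf_\rho g(\rho)$, is exactly what that one-line derivation leaves implicit.
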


Multiple linear constraints are handled by stacking their constraint
matrices.

\begin{corollary}[Stacked Linear Constraints]
\label{cor:stacked_constraints}
Let $A_i\in\mathbb{R}^{m_i\times n}$, $i=1,\ldots,k$, and define
$\mathcal A:=[A_1^{\mathrm T},\ldots,A_k^{\mathrm T}]^{\mathrm T}$.
Assume that $\mathcal A$ has full row rank and fewer than $n$ rows, and
let $Z$ have orthonormal columns spanning
$\mathcal N(\mathcal A)=\bigcap_{i=1}^k\mathcal N(A_i)$. Then
\[
\begin{aligned}
\lambda_{\min}(Z^{\mathrm T}HZ)
&=
\sup_{\rho\in\mathbb{R}}
\lambda_{\min}\left(
H+\rho\sum_{i=1}^k A_i^{\mathrm T}A_i
\right),\\
\lambda_{\max}(Z^{\mathrm T}HZ)
&=
\inf_{\rho\in\mathbb{R}}
\lambda_{\max}\left(
H-\rho\sum_{i=1}^k A_i^{\mathrm T}A_i
\right).
\end{aligned}
\]
\end{corollary}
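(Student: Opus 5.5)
The plan is to reduce Corollary~\ref{cor:stacked_constraints} directly to Theorem~\ref{thm:main_duality} and Corollary~\ref{cor:max_dual}, applied with the single stacked matrix $\mathcal A$ in place of $A$. By assumption, $\mathcal A\in\mathbb{R}^{m\times n}$ with $m:=\sum_{i=1}^k m_i<n$ has full row rank. Hence $\mathcal A$ meets the hypotheses imposed on $A$ in Theorem~\ref{thm:main_duality}, and $Z$ is an orthonormal basis of $\mathcal N(\mathcal A)$ exactly as required there.

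The first step is to record two block identities. The first is the Gram identity
\[
\mathcal A^{\mathrm T}\mathcal A=\sum_{i=1}^k A_i^{\mathrm T}A_i,
\]
which follows from multiplying the block column $[A_1^{\mathrm T},\ldots,A_k^{\mathrm T}]$ by the block row $[A_1;\ldots;A_k]$. The second is $\mathcal A x=0$ if and only if $A_ix=0$ for every $i$, which gives $\mathcal N(\mathcal A)=\bigcap_{i=1}^k\mathcal N(A_i)$.

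The second step is to substitute. Theorem~\ref{thm:main_duality} applied to $(H,\mathcal A,Z)$ gives
\[
\lambda_{\min}(Z^{\mathrm T}HZ)=\sup_{\rho\in\mathbb{R}}\lambda_{\min}\bigl(H+\rho\mathcal A^{\mathrm T}\mathcal A\bigr).
\]
Replacing $\mathcal A^{\mathrm T}\mathcal A$ by $\sum_i A_i^{\mathrm T}A_i$ yields the first identity of the corollary. The second identity follows in the same way from Corollary~\ref{cor:max_dual}, or equivalently by applying the first identity to $-H$ and using $\lambda_{\max}(M)=-\lambda_{\min}(-M)$ together with $\sup(-\cdot)=-\inf(\cdot)$.

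I do not expect a genuine obstacle. The only point needing care is that the rank hypothesis is stated for $\mathcal A$ as a whole, not for the individual $A_i$, which may be rank deficient or have overlapping row spaces. This is harmless, because the penalty matrix depends on the $A_i$ only through $\mathcal A^{\mathrm T}\mathcal A$, and the earlier results require only the full row rank of $\mathcal A$.
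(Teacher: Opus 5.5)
Your reduction is exactly the argument the paper intends: apply Theorem~\ref{thm:main_duality} and Corollary~\ref{cor:max_dual} to the stacked matrix $\mathcal A$, and use $\mathcal A^{\mathrm T}\mathcal A=\sum_{i=1}^k A_i^{\mathrm T}A_i$. One small correction to your closing aside, which does not affect the proof: full row rank of $\mathcal A$ forces each $A_i$ to have full row rank and the row spaces of distinct blocks to intersect trivially, so the rank-deficient or overlapping case you mention cannot occur under the stated hypothesis (the paper treats that case separately in its remark on redundant rows).
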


\begin{remark}
If $\mathcal A$ has redundant rows, it may be replaced by any
full-row-rank matrix with the same row space. This preserves its null
space and hence the constrained eigenvalues.
\end{remark}

\section{Finite attainability and first-order convergence rate}
\label{sec:attainability_convergence}

The preceding section characterizes the constrained minimum eigenvalue through the full-space penalty path $f$.
We now analyze finite attainability and the asymptotic convergence rate of this penalty path.
The corresponding results for the maximum-eigenvalue penalty path follow immediately by applying the same arguments to $-H$.

\subsection{Finite attainability}
\label{subsec:attainability}

At $\lambda_*$, the lower-right block
$D-\lambda_*I_{n-m}$ in the Schur complement argument is positive
semidefinite and may be singular. We therefore use the following
standard extension of Lemma~\ref{lem:schur_psd_criterion}.

\begin{lemma}[Generalized Positive-Semidefinite Schur Complement Criterion]
\label{lem:generalized_schur}
Let $P\in\mathbb{S}^n$, $Y\in\mathbb{R}^{n\times m}$, and
$S\in\mathbb{S}^m$ with $S\succeq0$. Then
\[
\begin{bmatrix}
P & Y\\
Y^{\mathrm T} & S
\end{bmatrix}
\succeq0
~~\Longleftrightarrow~~
(I_m-SS^\dagger)Y^{\mathrm T}=0
~~\text{and}~~
P-YS^\dagger Y^{\mathrm T}\succeq0,
\]
where $S^\dagger$ is the Moore--Penrose inverse of $S$.
\end{lemma}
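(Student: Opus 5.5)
We prove the generalized Schur complement criterion by reducing it to the nonsingular case, Lemma~\ref{lem:schur_psd_criterion}, via an orthogonal splitting of $\mathbb{R}^m$ along $\mathcal{R}(S)$ and $\mathcal{N}(S)$. Since $S\succeq0$ is symmetric, diagonalize $S=W\operatorname{diag}(S_1,0)W^{\mathrm T}$ with $W=[W_1,W_2]$ orthogonal, $S_1\succ0$ of size $r=\operatorname{rank}S$. Then $S^\dagger=W_1S_1^{-1}W_1^{\mathrm T}$ and $I_m-SS^\dagger=W_2W_2^{\mathrm T}$, the orthogonal projector onto $\mathcal{N}(S)$. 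Conjugating the block matrix by $\operatorname{diag}(I_n,W)$ preserves semidefiniteness and gives
\[
\begin{bmatrix}
P & Y_1 & Y_2\\
Y_1^{\mathrm T} & S_1 & 0\\
Y_2^{\mathrm T} & 0 & 0
\end{bmatrix},
\qquad Y_i:=YW_i .
\]

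\emph{Necessity of the range condition.} If the matrix is PSD, a PSD matrix with a zero diagonal block must have the corresponding off-diagonal blocks zero. To see this, evaluate the quadratic form at $(u,0,tv)$: it equals $u^{\mathrm T}Pu+2t\,u^{\mathrm T}Y_2v$, which is nonnegative for all real $t$ only if $u^{\mathrm T}Y_2v=0$. Hence $Y_2=0$, i.e.\ $YW_2=0$, which is equivalent to $W_2W_2^{\mathrm T}Y^{\mathrm T}=0$, that is $(I_m-SS^\dagger)Y^{\mathrm T}=0$. Conversely, if this condition holds then $Y_2=0$, and the zero row and column can be deleted without affecting semidefiniteness.

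\emph{Reduction to the nonsingular case.} In either direction we are now left with $\begin{bmatrix}P&Y_1\\Y_1^{\mathrm T}&S_1\end{bmatrix}\succeq0$ with $S_1\succ0$. By Lemma~\ref{lem:schur_psd_criterion} this is equivalent to $P-Y_1S_1^{-1}Y_1^{\mathrm T}\succeq0$. It remains to note that $Y_1S_1^{-1}Y_1^{\mathrm T}=YW_1S_1^{-1}W_1^{\mathrm T}Y^{\mathrm T}=YS^\dagger Y^{\mathrm T}$, which combines both directions into the stated equivalence.

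The only delicate point is the zero-diagonal-block argument in the necessity step; everything else is bookkeeping with the spectral decomposition of $S$ and the identification of $S^\dagger$ and $SS^\dagger$ in the $W$ basis. The degenerate cases $r=0$ and $r=m$ need no separate treatment: the same argument goes through with the corresponding blocks empty.
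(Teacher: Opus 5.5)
Your proof is correct. The necessity of the range condition is the paper's argument: you test the quadratic form on $(u,tz)$ with $z\in\mathcal N(S)$, which in your rotated coordinates is $(u,0,tv)$. The Schur-complement part, however, takes a different route. The paper never diagonalizes $S$. It uses the range condition in the form $Y=YS^\dagger S$ to write the block matrix directly as the congruence
\[
\begin{bmatrix} I_n & YS^\dagger\\ 0 & I_m\end{bmatrix}
\begin{bmatrix} P-YS^\dagger Y^{\mathrm T} & 0\\ 0 & S\end{bmatrix}
\begin{bmatrix} I_n & 0\\ S^\dagger Y^{\mathrm T} & I_m\end{bmatrix}.
\]
You instead rotate by the eigenvectors of $S$, delete the zero row and column, and apply Lemma~\ref{lem:schur_psd_criterion} to the positive definite block $S_1$, using $Y_1S_1^{-1}Y_1^{\mathrm T}=YS^\dagger Y^{\mathrm T}$. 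The paper's factorization is basis-free and self-contained: it does not rely on the nonsingular lemma, which it contains as the case $S\succ0$. Since its outer factors are nonsingular, it also shows that under the range condition the inertia of the block matrix splits as that of $P-YS^\dagger Y^{\mathrm T}$ plus that of $S$. Your reduction reuses the already stated nonsingular criterion and makes clear why the Moore--Penrose inverse appears: $S^\dagger$ is exactly the inverse of $S$ on $\mathcal R(S)$ and vanishes on $\mathcal N(S)$. The cost is a little bookkeeping with the spectral decomposition.
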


\begin{proof}
Suppose that the block matrix is positive semidefinite. For
$z\in\mathcal{N}(S)$, its quadratic form at
$[u^{\mathrm T},tz^{\mathrm T}]^{\mathrm T}$ is nonnegative for every
$u\in\mathbb{R}^n$ and $t\in\mathbb{R}$. Hence $Yz=0$, so
$\mathcal{R}(Y^{\mathrm T})\subseteq\mathcal{R}(S)$, equivalently
$
(I_m-SS^\dagger)Y^{\mathrm T}=0.
$
Under this range condition, $Y=YS^\dagger S$, and
\[
\begin{bmatrix}
P & Y\\
Y^{\mathrm T} & S
\end{bmatrix}
=
\begin{bmatrix}
I_n & YS^\dagger\\
0 & I_m
\end{bmatrix}
\begin{bmatrix}
P-YS^\dagger Y^{\mathrm T} & 0\\
0 & S
\end{bmatrix}
\begin{bmatrix}
I_n & 0\\
S^\dagger Y^{\mathrm T} & I_m
\end{bmatrix}.
\]
The two outer factors are nonsingular and are transposes of each other.
Therefore, by congruence, the block matrix is positive semidefinite if
and only if
$P-YS^\dagger Y^{\mathrm T}\succeq0$ and $S\succeq0$.
Since $S\succeq0$ is assumed, the stated equivalence follows.
\end{proof}
%

\begin{theorem}[Finite-Attainability Characterization]
\label{thm:attainability}
Under the assumptions of Theorem~\ref{thm:main_duality}, set
\[
K:=H-\lambda_*I_n,
\quad
\mathcal{E}_*
:=
\mathcal{N}(Z^{\mathrm T}HZ-\lambda_*I_{n-m}),
\]
where $\mathcal{E}_*$ is the eigenspace of
$Z^{\mathrm T}HZ$ associated with $\lambda_*$.
Then the following statements are equivalent.

\begin{enumerate}[leftmargin=2.8em]
\item[\textnormal{(i)}]
There exists a finite $\rho^*\in\mathbb{R}$ such that
$f(\rho^*)=\lambda_*$.

\item[\textnormal{(ii)}]
Every constrained minimizing direction is a full-space eigenvector of
$H$ associated with $\lambda_*$:
\begin{equation}
\label{eq:attain_cond}
KZw=0
~~\text{for every }w\in\mathcal{E}_*.
\end{equation}

\item[\textnormal{(iii)}]
The compressed kernels satisfy
\begin{equation}
\label{eq:attain_cond_kernel}
\mathcal{N}(Z^{\mathrm T}KZ)
=
\mathcal{N}(Z^{\mathrm T}K^2Z).
\end{equation}
\end{enumerate}

If these conditions hold, there exists $\bar\rho\geq0$ such that
$f(\rho)=\lambda_*$ for every $\rho\geq\bar\rho$. If they fail, then
$f(\rho)<\lambda_*$ for every finite $\rho$, $f$ is strictly increasing
on $\mathbb{R}$, and
$f(\rho)\uparrow\lambda_*$ as $\rho\to+\infty$.
\end{theorem}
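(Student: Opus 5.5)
The plan is to work throughout in the block coordinates of Lemma~\ref{lem:orthogonal_interlacing}, applied to the shifted matrix $K=H-\lambda_*I_n$. By \eqref{eq:block_penalty_matrix},
\[
Q^{\mathrm T}\bigl(K+\rho A^{\mathrm T}A\bigr)Q
=\begin{bmatrix} B-\lambda_*I_m+\rho C & E\\ E^{\mathrm T} & D-\lambda_*I_{n-m}\end{bmatrix}.
\]
Since $f(\rho)\le\lambda_*$ always holds (Lemma~\ref{lem:spectral_bounds}), condition (i) is equivalent to this block matrix being positive semidefinite for some $\rho^*$. Write $S:=D-\lambda_*I_{n-m}\succeq0$ and note that $\mathcal N(S)=\mathcal E_*$. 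Lemma~\ref{lem:generalized_schur} then turns (i) into two requirements. The first is $E w=0$ for all $w\in\mathcal E_*$. The second is
\[
B-\lambda_*I_m+\rho C-ES^\dagger E^{\mathrm T}\succeq0
\]
for some $\rho$. Because $C\succ0$, the second requirement holds for all sufficiently large $\rho$, by the same eigenvalue bound $\lambda_{\min}(K_\gamma+\rho C)\ge\lambda_{\min}(K_\gamma)+\rho\lambda_{\min}(C)$ used in the Schur-complement proof. So (i) reduces to the range condition $E\,\mathcal E_*=0$. The same reduction shows that $f(\rho)=\lambda_*$ for every $\rho$ beyond a threshold $\bar\rho$, which may be taken to be $\ge0$. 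This also follows from monotonicity of $f$.

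Next I show that the range condition is equivalent to (ii). For $w\in\mathcal E_*$, decompose $KZw=U(U^{\mathrm T}KZw)+Z(Z^{\mathrm T}KZw)=UEw+ZSw=UEw$. Since $U$ has orthonormal columns, $KZw=0$ if and only if $Ew=0$, which gives (i)$\Leftrightarrow$(ii).

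For (ii)$\Leftrightarrow$(iii), note first that $Z^{\mathrm T}KZ=S\succeq0$. Using $ZZ^{\mathrm T}+UU^{\mathrm T}=I_n$, we get $Z^{\mathrm T}K^2Z=S^2+E^{\mathrm T}E$. Both $S^2$ and $E^{\mathrm T}E$ are positive semidefinite, so $\mathcal N(Z^{\mathrm T}K^2Z)=\mathcal N(S)\cap\mathcal N(E)$, and also $\mathcal N(S^2)=\mathcal N(S)$. Hence (iii) holds exactly when $\mathcal N(S)\subseteq\mathcal N(E)$, i.e. $E\,\mathcal E_*=0$, which is (ii).

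The main obstacle is the failure case, specifically the claim that $f$ is strictly increasing on all of $\mathbb R$. First, $f(\rho)<\lambda_*$ for every finite $\rho$ follows directly from the negation of (i). The limit $f(\rho)\uparrow\lambda_*$ as $\rho\to+\infty$ comes from Theorem~\ref{thm:main_duality} together with monotonicity. For strictness, I would argue by contradiction. Since $f$ is concave and nondecreasing (Lemma~\ref{lem:spectral_bounds}), if $f(\rho_1)=f(\rho_2)$ for some $\rho_1<\rho_2$, then $f$ is constant on $[\rho_1,\infty)$. Indeed, concavity forces every slope to the right of $\rho_2$ to be $\le0$, and monotonicity forces it to be $\ge0$. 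But then $\lim_{\rho\to\infty}f(\rho)=f(\rho_1)<\lambda_*$, contradicting the limit just established. Therefore $f$ is strictly increasing. The care needed here is in the concavity slope argument, which I would make explicit via the three-point chord inequality for concave functions.
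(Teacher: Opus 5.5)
Your proof is correct. For the three equivalences it follows essentially the paper's route: the block form of $K+\rho A^{\mathrm T}A$, Lemma~\ref{lem:generalized_schur}, $C\succ0$ for large $\rho$, and $KZw=UEw$ on $\mathcal E_*$. There are two minor variations. You get (i)$\Rightarrow$(ii) from the necessity half of Lemma~\ref{lem:generalized_schur}, whereas the paper argues directly that $v=Zw$ satisfies $v^{\mathrm T}(K+\rho^*A^{\mathrm T}A)v=0$, hence $(K+\rho^*A^{\mathrm T}A)v=0$ and $Kv=0$. You also expand $Z^{\mathrm T}K^2Z=S^2+E^{\mathrm T}E$, where the paper simply writes it as $(KZ)^{\mathrm T}(KZ)$.

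The genuinely different step is strict monotonicity in the failure case. You combine concavity and monotonicity (Lemma~\ref{lem:spectral_bounds}) with the limit from Theorem~\ref{thm:main_duality}. By the three-chord inequality, a flat piece on $[\rho_1,\rho_2]$ propagates to $[\rho_1,\infty)$ and would keep $f$ strictly below $\lambda_*$. The paper instead picks $\rho_0\in(\rho_1,\rho_2)$ and a unit eigenvector $x$ of $H+\rho_0A^{\mathrm T}A$ for the common value $c$, then compares Rayleigh quotients. The relation $c\le x^{\mathrm T}(H+\rho_1A^{\mathrm T}A)x=c-(\rho_0-\rho_1)\|Ax\|^2$ forces $Ax=0$, hence $c=x^{\mathrm T}Hx\ge\lambda_*$.

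Your argument is shorter once concavity and the limit are available. The paper's argument is local and needs only the variational characterization of $\lambda_*$, not the duality theorem. It also exposes the mechanism: a flat stretch forces the minimizing eigenvector into $\mathcal N(A)$. Both arguments in fact show that $f$ can be flat only at the level $\lambda_*$.
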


\begin{proof}
For every $\rho\in\mathbb{R}$, Lemma~\ref{lem:spectral_bounds} gives
$
\lambda_{\min}\bigl(K+\rho A^{\mathrm T}A\bigr)
=
f(\rho)-\lambda_*
\leq0.
$
Hence
\begin{equation}
\label{eq:attainment_psd_equivalence}
f(\rho)=\lambda_*
~~\Longleftrightarrow~~
K+\rho A^{\mathrm T}A\succeq0.
\end{equation}

To prove \textnormal{(i)}$\Rightarrow$\textnormal{(ii)}, suppose
$f(\rho^*)=\lambda_*$, let $w\in\mathcal{E}_*$, and set $v:=Zw$.
Then
\[
Av=0,~~v^{\mathrm T}Kv
=
w^{\mathrm T}
\bigl(Z^{\mathrm T}HZ-\lambda_*I_{n-m}\bigr)w
=
0.
\]
By \eqref{eq:attainment_psd_equivalence},
$K+\rho^*A^{\mathrm T}A\succeq0$, and therefore
\[
v^{\mathrm T}
\bigl(K+\rho^*A^{\mathrm T}A\bigr)v
=
0
\quad\Longrightarrow\quad
\bigl(K+\rho^*A^{\mathrm T}A\bigr)v=0.
\]
Since $Av=0$, we have $A^{\mathrm T}Av=0$, and hence $Kv=0$,
proving \textnormal{(ii)}.

For the converse, assume \textnormal{(ii)} and retain the notation
in \eqref{QBEDC} from Lemma~\ref{lem:orthogonal_interlacing}. Set
$B_*:=B-\lambda_*I_m$ and
$D_*:=D-\lambda_*I_{n-m}\succeq0$. Then
\[
Q^{\mathrm T}(K+\rho A^{\mathrm T}A)Q
=
\begin{bmatrix}
B_*+\rho C & E\\
E^{\mathrm T} & D_*
\end{bmatrix}.
\]
If $w\in\mathcal{N}(D_*)=\mathcal{E}_*$, then
$
Q^{\mathrm T}KZw
=
\begin{bmatrix}
Ew\\
D_*w
\end{bmatrix},
$
so $KZw=0$ is equivalent to $Ew=0$. Hence
\eqref{eq:attain_cond} is equivalent to
$\mathcal{N}(D_*)\subseteq\mathcal{N}(E)$, or equivalently,
$
\mathcal{R}(E^{\mathrm T})\subseteq\mathcal{R}(D_*).
$
Under this range condition, Lemma~\ref{lem:generalized_schur} gives
\[
Q^{\mathrm T}(K+\rho A^{\mathrm T}A)Q\succeq0
~~\Longleftrightarrow~~
B_*+\rho C-ED_*^\dagger E^{\mathrm T}\succeq0.
\]
Since $C\succ0$,
$
\lambda_{\min}
\bigl(B_*-ED_*^\dagger E^{\mathrm T}+\rho C\bigr)
\geq
\lambda_{\min}
\bigl(B_*-ED_*^\dagger E^{\mathrm T}\bigr)
+
\rho\lambda_{\min}(C),
$
so there exists $\bar\rho\geq0$ such that the right-hand side is
positive semidefinite for every $\rho\geq\bar\rho$.
Since $Q$ is orthogonal, this implies
$K+\rho A^{\mathrm T}A\succeq0$, and
\eqref{eq:attainment_psd_equivalence} therefore yields
$f(\rho)=\lambda_*$ for every $\rho\geq\bar\rho$.
This proves \textnormal{(ii)}$\Rightarrow$\textnormal{(i)}.

To prove \textnormal{(ii)}$\Leftrightarrow$\textnormal{(iii)}, note that
$Z^{\mathrm T}KZ=D_*\succeq0$ and
$Z^{\mathrm T}K^2Z=(KZ)^{\mathrm T}(KZ)$. Hence
\[
\mathcal{N}(Z^{\mathrm T}K^2Z)
=
\{w:KZw=0\}
\subseteq
\mathcal{N}(Z^{\mathrm T}KZ)
=
\mathcal{E}_*.
\]
Equality holds if and only if
$KZw=0$ for every $w\in\mathcal{E}_*$, which is precisely
\eqref{eq:attain_cond}. Thus
\textnormal{(ii)} and \textnormal{(iii)} are equivalent.

Suppose now that \textnormal{(i)}--\textnormal{(iii)} fail. Then, by
\eqref{eq:attainment_psd_equivalence},
$f(\rho)<\lambda_*$ for every finite $\rho$. Since $f$ is
nondecreasing and Theorem~\ref{thm:main_duality} gives
$\sup\limits_{\rho\in\mathbb{R}}f(\rho)=\lambda_*$, we have
$f(\rho)\uparrow\lambda_*$ as $\rho\to+\infty$.
It remains to prove strict monotonicity. Suppose
$f(\rho_1)=f(\rho_2)=c$ for some $\rho_1<\rho_2$.
Since $f$ is nondecreasing, it is constant on
$[\rho_1,\rho_2]$. Choose $\rho_0\in(\rho_1,\rho_2)$ and a unit
eigenvector $x$ of $H+\rho_0A^{\mathrm T}A$ associated with $c$.
Then
\[
c
\leq
x^{\mathrm T}(H+\rho_1A^{\mathrm T}A)x
=
c-(\rho_0-\rho_1)\|Ax\|^2
\leq c.
\]
Hence $Ax=0$. Therefore
$c=x^{\mathrm T}Hx\geq\lambda_*$, contradicting
$c=f(\rho_0)<\lambda_*$. Thus $f$ is strictly increasing on
$\mathbb{R}$.
\end{proof}

\begin{remark}
\label{rem:attainability_geometry}
Finite attainability requires every constrained minimizing direction
to be a full-space eigenvector of $H$ associated with $\lambda_*$.
Hence, if $\lambda_*$ is a multiple eigenvalue of
$Z^{\mathrm T}HZ$, the condition must hold for every direction in
$\mathcal{E}_*$.
\end{remark}

The semidefinite formulation in
Subsection~\ref{subsec:optimization_proof} gives an equivalent dual
interpretation.

\begin{corollary}[Dual Attainment and Complementary Slackness]
\label{cor:dual_attainment}
Define
\[
S(\rho):=H+\rho A^{\mathrm T}A-\lambda_*I_n.
\]
A finite $\rho^*\in\mathbb{R}$ yields the dual optimal pair
$(\rho^*,\lambda_*)$ in \eqref{eq:dual_sdp} if and only if
$S(\rho^*)\succeq0$. In this case, every unit constrained minimizer
$x_*$ satisfies
$S(\rho^*)x_*=0$ and $Hx_*=\lambda_*x_*$.
Conversely, if every constrained minimizer is an eigenvector of $H$
associated with $\lambda_*$, then the dual optimum is attained at a
finite $\rho^*$.
\end{corollary}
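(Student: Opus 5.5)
The plan is to reduce everything to the equivalence \eqref{eq:attainment_psd_equivalence} and Theorem~\ref{thm:attainability}, reading the dual problem \eqref{eq:dual_sdp} through the strong duality already established in Subsection~\ref{subsec:optimization_proof}. By that argument the dual optimal value is $d^*=\lambda_*$. A pair $(\rho^*,\mu)$ is dual feasible if and only if $H+\rho^*A^{\mathrm T}A\succeq\mu I_n$. Hence $(\rho^*,\lambda_*)$ is dual optimal exactly when it is feasible, that is, when $S(\rho^*)\succeq0$. This proves the first equivalence. Equivalently, by \eqref{eq:attainment_psd_equivalence}, the condition is $f(\rho^*)=\lambda_*$.

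For the complementary-slackness part, take any unit constrained minimizer $x_*$. It satisfies $Ax_*=0$ and $x_*^{\mathrm T}Hx_*=\lambda_*$, and by \eqref{eq:exact_sdp_value} the matrix $X_*=x_*x_*^{\mathrm T}$ is primal optimal. I then compute
\[
x_*^{\mathrm T}S(\rho^*)x_*=x_*^{\mathrm T}Hx_*+\rho^*\|Ax_*\|^2-\lambda_*=0 .
\]
This is the statement $\langle S(\rho^*),X_*\rangle=0$. Since $S(\rho^*)\succeq0$, a vanishing quadratic form forces $S(\rho^*)x_*=0$, exactly as in the (i)$\Rightarrow$(ii) step of Theorem~\ref{thm:attainability}. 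Because $A^{\mathrm T}Ax_*=0$, it follows that $Hx_*=\lambda_*x_*$.

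For the converse, I will translate the hypothesis into condition (ii) of Theorem~\ref{thm:attainability}. The unit constrained minimizers are exactly the vectors $Zw$ with $w\in\mathcal{E}_*$ and $\|w\|=1$, by Lemma~\ref{lem:orthogonal_interlacing}(v) and Rayleigh--Ritz. So the hypothesis says $KZw=0$ for all unit $w\in\mathcal{E}_*$, and by linearity for all $w\in\mathcal{E}_*$; this is \eqref{eq:attain_cond}. Theorem~\ref{thm:attainability} then gives a finite $\bar\rho$ with $f(\bar\rho)=\lambda_*$. By \eqref{eq:attainment_psd_equivalence} this means $S(\bar\rho)\succeq0$, and the first part shows that $(\bar\rho,\lambda_*)$ is dual optimal.

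The only step needing care is identifying "constrained minimizers" with $Z\mathcal{E}_*$ on the unit sphere and passing from unit vectors to the whole subspace. Both are routine, so I expect no real obstacle: the corollary is essentially a reinterpretation of the preceding theorem.
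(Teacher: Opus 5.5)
Your proposal is correct and follows the paper's proof essentially step for step. Strong duality gives $d^*=\lambda_*$, so optimality of $(\rho^*,\lambda_*)$ reduces to feasibility $S(\rho^*)\succeq0$; the vanishing quadratic form of this positive semidefinite matrix then forces $S(\rho^*)x_*=0$ and $Hx_*=\lambda_*x_*$; and the converse comes from Theorem~\ref{thm:attainability}. Your identification of the unit constrained minimizers with $\{Zw: w\in\mathcal{E}_*,\ \|w\|=1\}$ (to reach condition~(ii)) and the return through \eqref{eq:attainment_psd_equivalence} just spell out what the paper compresses into ``follows directly.''
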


\begin{proof}
By the strong-duality argument in
Subsection~\ref{subsec:optimization_proof}, the dual optimal value is
$\lambda_*$. Hence $(\rho^*,\lambda_*)$ is optimal exactly when
$S(\rho^*)\succeq0$. For a constrained minimizer $x_*$,
$Ax_*=0$ and $x_*^{\mathrm T}Hx_*=\lambda_*$, so
$x_*^{\mathrm T}S(\rho^*)x_*=0$. Since $S(\rho^*)\succeq0$, this
implies $S(\rho^*)x_*=0$, and hence
$Hx_*=\lambda_*x_*$. The converse follows directly from
Theorem~\ref{thm:attainability}.
\end{proof}

Thus finite spectral attainment and finite dual attainment describe the
same phenomenon: every constrained minimizing direction must be a
full-space eigenvector associated with $\lambda_*$.
Theorem~\ref{thm:main_duality},
Theorem~\ref{thm:attainability}, and the monotonicity of $f$ yield the
following definiteness consequences.

\begin{corollary}[Definiteness Consequences]
\label{cor:definiteness_consequences}
Under the assumptions of Theorem~\ref{thm:main_duality}, the following
reformulations hold.

\noindent
\textbf{Reformulation of Lemma~\ref{thm:Finsler}.}
The following statements are equivalent:
\begin{enumerate}[label=(\alph*),leftmargin=2.2em]
\item $Z^{\mathrm T}HZ\succ0$;
\item $\sup\limits_{\rho\in\mathbb{R}}f(\rho)>0$;
\item there exists $\rho_0\geq0$ such that
$H+\rho A^{\mathrm T}A\succ0$ for every $\rho\geq\rho_0$.
\end{enumerate}

\noindent
\textbf{Reformulation of Lemma~\ref{thm:AnstreicherWright}.}
If $\lambda_*=0$, the following statements are equivalent:
\begin{enumerate}[label=(\alph*),leftmargin=2.2em]
\item there exists $\rho_0\geq0$ such that
$H+\rho A^{\mathrm T}A\succeq0$ for every $\rho\geq\rho_0$;
\item there exists a finite $\rho_0\geq0$ such that $f(\rho_0)=0$;
\item $HZw=0$ for every
$w\in\mathcal{N}(Z^{\mathrm T}HZ)$;
\item
$\mathcal{N}(Z^{\mathrm T}HZ)
=
\mathcal{N}(Z^{\mathrm T}H^2Z)$.
\end{enumerate}
If these conditions fail, then $f(\rho)<0$ for every finite $\rho$,
$f$ is strictly increasing on $\mathbb{R}$, and
$f(\rho)\uparrow0$ as $\rho\to+\infty$.
\end{corollary}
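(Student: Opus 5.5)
The corollary is a repackaging of Theorem~\ref{thm:main_duality}, Theorem~\ref{thm:attainability}, and Lemma~\ref{lem:spectral_bounds}; I would prove the two reformulations separately, relying only on monotonicity of $f$ and on $\sup_\rho f(\rho)=\lambda_*$.

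\textbf{Finsler reformulation.} First, (a)$\Leftrightarrow$(b) is immediate. Indeed, $Z^{\mathrm T}HZ\succ0$ holds exactly when $\lambda_*>0$, and Theorem~\ref{thm:main_duality} identifies $\lambda_*$ with $\sup_\rho f(\rho)$.

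For (b)$\Rightarrow$(c), suppose the supremum is positive. Then some $\rho_1$ satisfies $f(\rho_1)>0$, and I would set $\rho_0:=\max\{\rho_1,0\}$. Since $f$ is nondecreasing, $f(\rho)\geq f(\rho_1)>0$ for every $\rho\geq\rho_0$, i.e.\ $H+\rho A^{\mathrm T}A\succ0$.

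For (c)$\Rightarrow$(b), note that $f(\rho_0)>0$, so the supremum is positive.

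\textbf{Anstreicher--Wright reformulation.} Set $\lambda_*=0$, so that $K=H$ in Theorem~\ref{thm:attainability}.

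For (a)$\Rightarrow$(b), condition (a) gives $f(\rho_0)\geq0$. Lemma~\ref{lem:spectral_bounds} gives $f\le\lambda_*=0$, hence $f(\rho_0)=0$.

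For (b)$\Rightarrow$(a), monotonicity together with the upper bound $0$ gives $f(\rho)=0$ for all $\rho\geq\rho_0$. Hence $H+\rho A^{\mathrm T}A\succeq0$ for all such $\rho$.

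Next, (b) is statement (i) of Theorem~\ref{thm:attainability} with $\lambda_*=0$. One direction is trivial. For the other, a finite attaining $\rho^*$ may be replaced by $\max\{\rho^*,0\}$, since attainment persists for larger $\rho$ by monotonicity.

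Finally, (c) and (d) are statements (ii) and (iii) of that theorem, once we note that $\mathcal E_*=\mathcal N(Z^{\mathrm T}HZ)$ when $\lambda_*=0$. The closing assertion, that $f<0$, $f$ is strictly increasing, and $f\uparrow0$ when the conditions fail, is the last sentence of Theorem~\ref{thm:attainability} specialized to $\lambda_*=0$.

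There is no real obstacle. The only point needing care is the requirement $\rho_0\ge0$, which the earlier results phrase for arbitrary real $\rho$; taking a maximum with $0$ and invoking monotonicity handles it in every direction.
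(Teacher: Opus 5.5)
Your proposal is correct and takes the same route the paper indicates: the corollary follows directly from Theorem~\ref{thm:main_duality}, from Theorem~\ref{thm:attainability} specialized to $\lambda_*=0$ (so that $K=H$ and $\mathcal{E}_*=\mathcal{N}(Z^{\mathrm T}HZ)$), and from the monotonicity and upper bound $f\le\lambda_*$ in Lemma~\ref{lem:spectral_bounds}. Replacing the attaining parameter by its maximum with $0$ and invoking monotonicity to enforce $\rho_0\ge0$ is exactly the bookkeeping step that the paper leaves implicit.
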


The two parts recover Lemmas~\ref{thm:Finsler} and
\ref{thm:AnstreicherWright}, respectively. Thus the penalty path has
two distinct regimes: finite exact recovery and asymptotic recovery
from below. Next, we quantify the latter.

\subsection{First-order convergence rate}
\label{subsec:convergence_rate}
The next theorem identifies the leading-order asymptotic error through
the coupling, induced by $H$, between the constrained minimizing
eigenspace and $\mathcal{R}(A^{\mathrm T})$. The coefficient is
positive in the nonattainable case and vanishes under finite
attainability.

\begin{theorem}[First-Order Convergence Theorem]
\label{thm:main_convergence}
Under the assumptions of Theorem~\ref{thm:main_duality}, let
$r:=\dim(\mathcal{E}_*)$, and let
$W\in\mathbb{R}^{(n-m)\times r}$ have orthonormal columns spanning
$\mathcal{E}_*$. Then, as $\rho\to+\infty$,
\begin{equation}
\label{eq:main_convergence}
f(\rho)
=
\lambda_*-\frac{c}{\rho}
+
O\left(\frac{1}{\rho^2}\right),
\end{equation}
\begin{align}
c
&=
\lambda_{\max}
\left(
W^{\mathrm T}Z^{\mathrm T}H
A^{\mathrm T}(AA^{\mathrm T})^{-2}
AHZW
\right)
=
\max_{\substack{w\in\mathcal{E}_*\\ \|w\|=1}}
\left\|
(AA^{\mathrm T})^{-1}AHZw
\right\|^2.
\label{eq:c_constant_variational}
\end{align}
The coefficient $c$ is nonnegative, and $c=0$ if and only if the
finite-attainability conditions of Theorem~\ref{thm:attainability}
hold. Hence, in the nonattainable case, $c>0$ and
\[
\lambda_*-f(\rho)
=
\frac{c}{\rho}
+
O\left(\frac{1}{\rho^2}\right).
\]

If $\lambda_*$ is simple, let $v_*$ be a corresponding unit eigenvector
of $Z^{\mathrm T}HZ$ and set $x_*:=Zv_*$. Then
\begin{equation}
\label{eq:c_constant_simple}
c
=
(Hx_*)^{\mathrm T}
A^{\mathrm T}(AA^{\mathrm T})^{-2}A(Hx_*)
=
\left\|
(AA^{\mathrm T})^{-1}A(Hx_*)
\right\|^2.
\end{equation}
\end{theorem}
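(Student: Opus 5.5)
We work in the block representation \eqref{eq:block_penalty_matrix} of Lemma~\ref{lem:orthogonal_interlacing} and treat $\epsilon:=1/\rho$ as a small parameter. First we translate $c$ into block form. Since $U$ spans $\mathcal{R}(A^{\mathrm T})$ and $\Sigma=AU$ is nonsingular, we have $A=\Sigma U^{\mathrm T}$. Hence $AA^{\mathrm T}=\Sigma\Sigma^{\mathrm T}$ and
\[
(AA^{\mathrm T})^{-1}AHZw=\Sigma^{-\mathrm T}U^{\mathrm T}HZw=\Sigma^{-\mathrm T}Ew .
\]
It follows that $\|(AA^{\mathrm T})^{-1}AHZw\|^2=w^{\mathrm T}E^{\mathrm T}C^{-1}Ew$, so
\[
c=\lambda_{\max}(W^{\mathrm T}E^{\mathrm T}C^{-1}EW).
\]
The two expressions in \eqref{eq:c_constant_variational} are then identical by Rayleigh--Ritz on $\mathcal{E}_*$, and \eqref{eq:c_constant_simple} is the case $r=1$. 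Clearly $c\ge0$. Moreover, $c=0$ iff $EW=0$. In the proof of Theorem~\ref{thm:attainability} we showed that $KZw=0\iff Ew=0$ for $w\in\mathcal{E}_*$, so $EW=0$ is exactly condition (ii) there. This settles the characterization of $c=0$.

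For the expansion, we use a Schur-complement eigenvalue equation. Shifting by $\lambda$, $\lambda$ is an eigenvalue of the block matrix iff the following holds, provided $B-\lambda I+\rho C$ is invertible (true for $\lambda$ near $\lambda_*$ and $\rho$ large, since $C\succ0$):
\[
\det\bigl(D-\lambda I-E^{\mathrm T}(B-\lambda I+\rho C)^{-1}E\bigr)=0.
\]
Expanding with a Neumann series in $\epsilon$ gives
\[
(B-\lambda I+\rho C)^{-1}=\epsilon C^{-1}-\epsilon^2C^{-1}(B-\lambda I)C^{-1}+O(\epsilon^3).
\]
Thus the eigenvalues near $\lambda_*$ are the eigenvalues of the analytic (in $\epsilon,\lambda$) symmetric family
\[
T(\lambda,\epsilon)=D-\epsilon E^{\mathrm T}C^{-1}E+O(\epsilon^2)
\]
that satisfy the nonlinear fixed-point condition $\lambda\in\operatorname{spec}T(\lambda,\epsilon)$. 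Eigenvalues of $H+\rho A^{\mathrm T}A$ coming from the $y$-block go to $+\infty$, so for large $\rho$ the smallest one is the smallest solution branch near the bottom of $\operatorname{spec}(D)$.

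Next we apply degenerate first-order perturbation theory. The $r$ eigenvalues of $T(\lambda,\epsilon)$ near $\lambda_*$ equal $\lambda_*+\epsilon\,\mu_i+O(\epsilon^2)$, where $\mu_i$ are the eigenvalues of $-W^{\mathrm T}E^{\mathrm T}C^{-1}EW$. Here the $O(\epsilon^2)$ term is uniform for $\lambda$ in a neighborhood, since the $\lambda$-dependence enters only at order $\epsilon^2$. Solving the fixed-point equation (implicit function / contraction in $\lambda$, using that $\partial_\lambda T=O(\epsilon^2)$) preserves the expansion. The smallest branch is then
\[
\lambda_*-\epsilon\,\lambda_{\max}(W^{\mathrm T}E^{\mathrm T}C^{-1}EW)+O(\epsilon^2)=\lambda_*-c/\rho+O(\rho^{-2}).
\]
The other eigenvalues of $D$ exceed $\lambda_*$ by a gap, so they cannot interfere for large $\rho$.

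The main obstacle is justifying the $O(\epsilon^2)$ remainder in the degenerate case $r>1$ with the $\lambda$-dependent Schur complement. Eigenvalue splittings of a symmetric analytic family could in principle be only Lipschitz, but a first-order expansion with $O(\epsilon^2)$ error still holds. To see this, compress $T$ onto the spectral subspace of $D$ for $\lambda_*$, using an exact reduction: the Schur complement again, onto $W$ versus its complement in $\mathbb{R}^{n-m}$, which is legitimate thanks to the spectral gap. The result is an $r\times r$ matrix $\lambda_*I-\epsilon W^{\mathrm T}E^{\mathrm T}C^{-1}EW+O(\epsilon^2)$, and Weyl's inequality then gives the error bound directly. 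As an alternative sanity check for the upper bound, test the Rayleigh quotient with $x=Z w-\epsilon\,U C^{-1}Ew$ for the maximizing $w$. This yields
\[
f(\rho)\le\lambda_*-\epsilon\,w^{\mathrm T}E^{\mathrm T}C^{-1}Ew+O(\epsilon^2),
\]
matching the bound from the reduction.
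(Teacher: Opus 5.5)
Your proposal is correct and follows essentially the paper's route: eliminate the $\mathcal{R}(A^{\mathrm T})$-block by a Schur complement with $(B-\lambda I+\rho C)^{-1}=\rho^{-1}C^{-1}+O(\rho^{-2})$, reduce to $D-\rho^{-1}G+O(\rho^{-2})$ with $G:=E^{\mathrm T}C^{-1}E$, use first-order cluster perturbation for the lower bound and the same test vector $Zw-\rho^{-1}UC^{-1}Ew$ for the upper bound, and characterize $c=0$ through $EW=0$. The paper is more economical in that it needs no fixed-point/contraction or branch-identification step: $f(\rho)$ is \emph{some} eigenvalue of the symmetric matrix $D-\rho^{-1}G+R_\rho$ with $\|R_\rho\|=O(\rho^{-2})$ uniformly, so Weyl's inequality directly gives $f(\rho)\ge\lambda_{\min}(D-\rho^{-1}G)-O(\rho^{-2})$, and the test vector supplies the matching upper bound.
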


\begin{proof}
Retain the notation in \eqref{QBEDC} from
Lemma~\ref{lem:orthogonal_interlacing}. Set
$t:=1/\rho$ and $\lambda_t:=f(1/t)$. Since $f$ is nondecreasing and
Theorem~\ref{thm:main_duality} gives
$\sup\limits_{\rho\in\mathbb{R}}f(\rho)=\lambda_*$, we have
$\lambda_t\to\lambda_*$ as $t\downarrow0$; in particular,
$\lambda_t$ is bounded for sufficiently small $t>0$.

Let $[y_t^{\mathrm T},z_t^{\mathrm T}]^{\mathrm T}$ be a unit
eigenvector of the block matrix in
\eqref{eq:block_penalty_matrix}, with $\rho=t^{-1}$, associated with
$\lambda_t$. The corresponding block eigenvalue equations are
\begin{align}
\bigl(B+t^{-1}C-\lambda_tI_m\bigr)y_t+Ez_t&=0,
\label{eq:block_eig_first}\\
E^{\mathrm T}y_t+(D-\lambda_tI_{n-m})z_t&=0.
\label{eq:block_eig_second}
\end{align}

Since $C\succ0$,
$F_t:=B+t^{-1}C-\lambda_tI_m$ is nonsingular for sufficiently small
$t$, with $F_t^{-1}=tC^{-1}+O(t^2)$. Hence
$y_t=-F_t^{-1}Ez_t$, and substitution into
\eqref{eq:block_eig_second} gives
\[
(D-tG+R_t)z_t=\lambda_tz_t,
\quad
G:=E^{\mathrm T}C^{-1}E,
\quad
\|R_t\|=O(t^2),
\]
where $R_t:=tG-E^{\mathrm T}F_t^{-1}E$.
Since $F_t$ is nonsingular, $z_t\neq0$. Thus Weyl's inequality gives
$\lambda_t\geq\lambda_{\min}(D-tG)-\|R_t\|$. Since $\lambda_*$ is
the smallest eigenvalue of $D$ with eigenspace $\mathcal R(W)$, the
standard first-order perturbation formula for the corresponding
symmetric eigenvalue cluster yields
\[
\lambda_t
\geq
\lambda_*
-
t\lambda_{\max}(W^{\mathrm T}GW)
+
O(t^2).
\]

For the matching upper bound, choose a unit $q\in\mathbb{R}^r$ such
that
$q^{\mathrm T}W^{\mathrm T}GWq
=\lambda_{\max}(W^{\mathrm T}GW)$, and set
$z:=Wq$ and $y:=-tC^{-1}Ez$. Then
$Dz=\lambda_*z$ and $\|y\|^2+\|z\|^2=1+O(t^2)$.
By Theorem~\ref{thm:Rayleigh_Ritz}, $\lambda_t$ is bounded above by
the Rayleigh quotient of $[y^{\mathrm T},z^{\mathrm T}]^{\mathrm T}$.
A direct expansion therefore gives
\[
\lambda_t
\leq
\lambda_*
-
t\lambda_{\max}(W^{\mathrm T}GW)
+
O(t^2).
\]
Combining the two bounds and using $t=1/\rho$ gives
\[
f(\rho)
=
\lambda_*
-
\frac{\lambda_{\max}(W^{\mathrm T}GW)}{\rho}
+
O(\rho^{-2}).
\]

It remains to express the coefficient in terms of the original
operators. Since $A=\Sigma U^{\mathrm T}$,
$C=\Sigma^{\mathrm T}\Sigma$, and $E=U^{\mathrm T}HZ$,
\[
G
=E^{\mathrm T}C^{-1}E
=Z^{\mathrm T}HU(\Sigma^{\mathrm T}\Sigma)^{-1}U^{\mathrm T}HZ
=Z^{\mathrm T}HA^{\mathrm T}(AA^{\mathrm T})^{-2}AHZ,
\]
\[
W^{\mathrm T}GW
=
\left((AA^{\mathrm T})^{-1}AHZW\right)^{\mathrm T}
\left((AA^{\mathrm T})^{-1}AHZW\right).
\]
Since the columns of $W$ form an orthonormal basis of
$\mathcal E_*$, Theorem \ref{thm:Rayleigh_Ritz} gives
\eqref{eq:c_constant_variational}.

To characterize when $c=0$, use
$G=E^{\mathrm T}C^{-1}E$ to obtain
\[
W^{\mathrm T}GW
=
(C^{-1/2}EW)^{\mathrm T}(C^{-1/2}EW)
\succeq0.
\]
Thus $c\geq0$. Since $C\succ0$, we have
$c=0$ if and only if $EW=0.$
Because the columns of $W$ span $\mathcal E_*$ and
$
Q^{\mathrm T}(H-\lambda_*I_n)Zw
=
\begin{bmatrix}
Ew\\
0
\end{bmatrix}
,$
we have
$c=0$ if and only if
$(H-\lambda_*I_n)Zw=0$ for every $w\in\mathcal E_*$.
By Theorem~\ref{thm:attainability}, this is precisely the
finite-attainability condition. Thus $c>0$ in the nonattainable case.
If $\lambda_*$ is simple, then $r=1$ and we may take $W=v_*$.
With $x_*=Zv_*$, \eqref{eq:c_constant_variational} gives
\eqref{eq:c_constant_simple}.
\end{proof}

The variational characterization of $c$ measures the strongest
first-order coupling, through $H$, between the constrained minimizing
eigenspace and the constraint-normal space
$\mathcal R(A^{\mathrm T})$. It also shows that $c$ is independent of
the particular orthonormal bases used to represent these subspaces.
Together with Theorem~\ref{thm:attainability}, this yields the following
dichotomy.

\begin{corollary}[Finite-versus-Asymptotic Recovery]
\label{cor:convergence_patterns}
Under the assumptions of Theorem~\ref{thm:main_duality}, exactly one
of the following alternatives holds, with $c$ as in
Theorem~\ref{thm:main_convergence}.

\begin{enumerate}[leftmargin=2.8em]
\item[\textnormal{(i)}]
\textbf{Finite exact recovery.}
The equivalent conditions in Theorem~\ref{thm:attainability} hold,
or equivalently $c=0$. Then there exists $\bar\rho\geq0$ such that
$f(\rho)=\lambda_*$ for every $\rho\geq\bar\rho$.

\item[\textnormal{(ii)}]
\textbf{Strict asymptotic recovery.}
The equivalent conditions in Theorem~\ref{thm:attainability} fail,
or equivalently $c>0$. Then $f(\rho)<\lambda_*$ for every finite
$\rho$, $f$ is strictly increasing on $\mathbb{R}$, and, as
$\rho\to+\infty$,
\[
f(\rho)\uparrow\lambda_*,
\quad
\lambda_*-f(\rho)
=
\frac{c}{\rho}+O(\rho^{-2}).
\]
\end{enumerate}

In particular, if $\lambda_*=0$, either $f(\rho)=0$ for all
sufficiently large $\rho$, or $f(\rho)<0$ for every finite $\rho$ and
$f(\rho)=-c/\rho+O(\rho^{-2})$ with $c>0$.
\end{corollary}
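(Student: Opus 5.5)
This corollary is essentially a repackaging of Theorem~\ref{thm:attainability} and Theorem~\ref{thm:main_convergence}, so the proof is bookkeeping. First, the two alternatives are mutually exclusive and exhaustive. The equivalent conditions (i)--(iii) of Theorem~\ref{thm:attainability} either hold or fail. Theorem~\ref{thm:main_convergence} shows $c\ge 0$, with $c=0$ exactly when those conditions hold. Hence ``conditions hold'' is equivalent to $c=0$, and ``conditions fail'' is equivalent to $c>0$. So exactly one alternative occurs, and each equivalence stated in (i) and (ii) is justified.

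In case (i), I invoke the concluding part of Theorem~\ref{thm:attainability}: when its conditions hold, there is $\bar\rho\ge0$ with $f(\rho)=\lambda_*$ for all $\rho\ge\bar\rho$. Remark~\ref{rem:monotonicity_consequences} gives the same conclusion from monotonicity once a finite $\rho^*$ attains the supremum.

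In case (ii), Theorem~\ref{thm:attainability} gives $f(\rho)<\lambda_*$ for every finite $\rho$, strict increase of $f$ on $\mathbb{R}$, and $f(\rho)\uparrow\lambda_*$. The expansion $\lambda_*-f(\rho)=c/\rho+O(\rho^{-2})$ is \eqref{eq:main_convergence} rearranged, and $c>0$ was just established.

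The final sentence is the specialization $\lambda_*=0$. In case (i), $f(\rho)=0$ for all large $\rho$. In case (ii), $f(\rho)<0$ for every finite $\rho$ and $f(\rho)=-c/\rho+O(\rho^{-2})$ with $c>0$. This matches the Anstreicher--Wright reformulation in Corollary~\ref{cor:definiteness_consequences}.

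There is no real obstacle. The only point needing care is the identification of ``$c=0$'' with the attainability conditions, and Theorem~\ref{thm:main_convergence} already proves this via $W^{\mathrm T}GW=(C^{-1/2}EW)^{\mathrm T}(C^{-1/2}EW)$ and the equivalence $EW=0\Leftrightarrow KZw=0$ for all $w\in\mathcal{E}_*$. I simply cite that step.
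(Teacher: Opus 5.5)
Your proposal is correct and takes the same route as the paper: the paper gives no separate proof and simply notes that the corollary follows from Theorem~\ref{thm:main_convergence} (including its identification of $c=0$ with finite attainability) together with Theorem~\ref{thm:attainability}, which is exactly the bookkeeping you carry out. One small point: if you rely on Remark~\ref{rem:monotonicity_consequences} rather than the concluding clause of Theorem~\ref{thm:attainability}, take $\bar\rho:=\max\{\rho^*,0\}$ so that $\bar\rho\geq0$.
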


The first-order expansion also yields a two-level extrapolation that
cancels the leading $1/\rho$ error.

\begin{corollary}[Two-Level Extrapolation]
\label{cor:richardson_extrapolation}
Under the assumptions of Theorem~\ref{thm:main_duality},
\[
2f(2\rho)-f(\rho)
=
\lambda_*+O(\rho^{-2}),
\quad \rho\to+\infty.
\]
\end{corollary}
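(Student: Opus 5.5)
The plan is to substitute the expansion \eqref{eq:main_convergence} of Theorem~\ref{thm:main_convergence} at the two penalty levels $\rho$ and $2\rho$, and then take the linear combination that cancels the $1/\rho$ term.

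Fix the coefficient $c\geq0$ from \eqref{eq:c_constant_variational}. By Theorem~\ref{thm:main_convergence}, there exist constants $M>0$ and $\rho_0>0$ such that
\[
\bigl|f(\rho)-\lambda_*+c\rho^{-1}\bigr|\leq M\rho^{-2}
\quad\text{for every }\rho\geq\rho_0 .
\]
For $\rho\geq\rho_0$ we also have $2\rho\geq\rho_0$. Applying the same bound at $2\rho$ gives
\[
f(2\rho)=\lambda_*-\frac{c}{2\rho}+e_2,
\qquad |e_2|\leq\frac{M}{4}\rho^{-2}.
\]
At $\rho$ itself we write
\[
f(\rho)=\lambda_*-\frac{c}{\rho}+e_1,
\qquad |e_1|\leq M\rho^{-2}.
\]

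Next we form the combination:
\[
2f(2\rho)-f(\rho)=2\lambda_*-\frac{c}{\rho}-\lambda_*+\frac{c}{\rho}+2e_2-e_1=\lambda_*+2e_2-e_1 .
\]
The remainder is bounded by $|2e_2-e_1|\leq\tfrac{3M}{2}\rho^{-2}$, which is $O(\rho^{-2})$. This proves the claim.

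The argument covers both regimes of Corollary~\ref{cor:convergence_patterns}. In the attainable case $c=0$, and for large $\rho$ we in fact get $f(\rho)=f(2\rho)=\lambda_*$ exactly, so the combination equals $\lambda_*$ for all sufficiently large $\rho$. The only point that needs care is that the $O(\rho^{-2})$ constant in Theorem~\ref{thm:main_convergence} is uniform for all large $\rho$, so that the same $M$ can be used at both $\rho$ and $2\rho$. This holds because that theorem's statement is an asymptotic bound as $\rho\to+\infty$, which by definition provides a single constant valid on a tail $[\rho_0,\infty)$.
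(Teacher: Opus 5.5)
Your proposal is correct and follows the same route as the paper, which states the corollary as an immediate consequence of Theorem~\ref{thm:main_convergence} without a separate proof. You substitute the expansion at $\rho$ and $2\rho$ with a single tail constant, so the $c/\rho$ terms cancel and the remainder is bounded by $\tfrac{3M}{2}\rho^{-2}$, which is exactly that argument.
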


\section{A matrix-free Penalty--Split--Merge framework}
\label{sec:penalty_sm}

The penalty characterizations above reduce the constrained extremal
eigenvalue problem to dominant eigenvalue computations along a penalty
path. We therefore combine penalty continuation with the
Split--Merge iteration~\cite{LiuSongXia2026}. The continuation is guided
by a safeguarded Hellmann--Feynman (HF)/KKT predictor, whereas final
acceptance is determined independently by the penalized eigen-residual
and projected feasibility, KKT, and outer-change tests.

\subsection{Penalty formulation and matrix-free Split--Merge inner solver}
\label{subsec:sm_inner}
Let $X\in\mathbb{R}^{n\times s}$ contain previously computed feasible
orthonormal eigenvectors. We assume $X^{\mathrm T}X=I_s$, $AX=0$, and
$0\leq s<n-m$; terms involving $X$ are absent when $s=0$. The next
eigenvector is sought in
$\mathcal S_X:=\mathcal N(A)\cap\operatorname{span}(X)^\perp$.
Define $\widehat A_X:=[A^{\mathrm T},X]^{\mathrm T}$. Then
$\mathcal N(\widehat A_X)=\mathcal S_X$ and
$\widehat A_X^{\mathrm T}\widehat A_X=A^{\mathrm T}A+XX^{\mathrm T}$.
Moreover, $AX=0$ makes the row spaces of $A$ and $X^{\mathrm T}$
orthogonal, so $\widehat A_X$ has full row rank $m+s<n$.
Since $A^{\mathrm T}A+XX^{\mathrm T}\succeq0$, the map
$\rho\mapsto\lambda_{\max}(M_\rho^{(X)})$ is nonincreasing.
Corollary~\ref{cor:max_dual} gives
\begin{equation}
\label{eq:penalty_characterization_sm}
\lambda_X^c
:=
\max_{\substack{u\in\mathcal S_X\\ \|u\|=1}}u^{\mathrm T}Hu
=
\inf_{\rho\geq0}\lambda_{\max}\!\left(M_\rho^{(X)}\right),
\end{equation}
where
$M_\rho^{(X)}=H-\rho(A^{\mathrm T}A+XX^{\mathrm T})$ is applied as
$M_\rho^{(X)}v=Hv-\rho A^{\mathrm T}(Av)-\rho X(X^{\mathrm T}v)$.
Thus neither $A^{\mathrm T}A$, $XX^{\mathrm T}$, nor a basis of
$\mathcal S_X$ is formed.

To apply Split--Merge to a positive semidefinite operator, let
$\underline\lambda_H\leq\lambda_{\min}(H)$,
$\overline\lambda_H\geq\lambda_{\max}(H)$, and
$\overline\nu_A\geq\|A\|^2$, and set
\begin{equation}
\label{eq:shift_scale}
\nu_X=
\begin{cases}
\overline\nu_A,&s=0,\\
\overline\nu_A+1,&s>0,
\end{cases}
~~
\eta_\rho=\max\{\rho\nu_X-\underline\lambda_H,0\},
~~
\mathcal K_\rho^{(X)}
=
\frac{M_\rho^{(X)}+\eta_\rho I}{\sigma_\rho},
\end{equation}
where
$\sigma_\rho=\max\{1,\overline\lambda_H+\eta_\rho\}$.
Since
$A^{\mathrm T}A+XX^{\mathrm T}\preceq\nu_XI$,
the spectrum of $\mathcal K_\rho^{(X)}$ lies in $[0,1]$; the shift and
positive scaling preserve eigenvectors and their ordering.
Equation~\eqref{eq:shift_scale} is a conservative valid choice: any
verified bound
$\nu_X\geq\|A^{\mathrm T}A+XX^{\mathrm T}\|$ may replace it without
changing $M_\rho^{(X)}$ or its eigenvectors.

For a normalized inner iterate $u$, let
$\mu=u^{\mathrm T}\mathcal K_\rho^{(X)}u$ and
$\theta^{\rm pen}=\sigma_\rho\mu-\eta_\rho
=u^{\mathrm T}M_\rho^{(X)}u$. Inner convergence is certified in the
original penalized operator by
\begin{equation}
\label{eq:inner_residual}
r_M
:=
\frac{\|M_\rho^{(X)}u-\theta^{\rm pen}u\|}
{\max\{1,|\theta^{\rm pen}|\}}
=
\frac{\sigma_\rho\|\mathcal K_\rho^{(X)}u-\mu u\|}
{\max\{1,|\theta^{\rm pen}|\}}
\leq\varepsilon_{\rm in}.
\end{equation}
The preceding converged vector is used as a warm start. The native
Split--Merge update is the two-product iteration of
\cite{LiuSongXia2026}; we summarize only the safeguards added here.
Within a fixed penalty level, let
$\lambda_{\mathcal K}(v)=v^{\mathrm T}\mathcal K_\rho^{(X)}v/\|v\|^2$
and retain the accepted iterate $u_{\rm best}$ with the largest observed
value $\lambda_{\rm best}$. A candidate $v$ is rejected if
$\lambda_{\mathcal K}(v)<
\lambda_{\rm best}-\delta_{\rm br}\max\{1,|\lambda_{\rm best}|\}$.
The residual is checked at a fixed interval; a check is regarded as
progress when it is at most $\tau_{\rm prog}$ times the best previously
checked residual, and $m_{\rm stag}$ consecutive unsuccessful checks
trigger a short Krylov--Ritz rescue. The threshold
$\varepsilon_\sigma$ guards against numerically unsafe native
Split--Merge coefficients; when this guard is activated, a normalized
power step is used. These safeguards never replace the residual
certificate \eqref{eq:inner_residual}. For the runtime comparisons in
Section~\ref{sec:numerical_experiments}, the rescue is realized by the
verified short-recurrence Lanczos implementation described there.

\subsection{Hellmann--Feynman sensitivity and penalty prediction}
\label{subsec:hf_prediction}

Define
\[
\phi_X(\rho):=\lambda_{\max}(M_\rho^{(X)}),
\quad
\mathcal V_X(u):=\|Au\|^2+\|X^{\mathrm T}u\|^2.
\]
By Theorem~\ref{thm:Rayleigh_Ritz},
\[
\phi_X(\rho)
=
\max_{\|u\|=1}
\left\{
u^{\mathrm T}Hu-\rho\mathcal V_X(u)
\right\}.
\]
Each Rayleigh quotient in this maximum is nonincreasing in $\rho$,
so $\phi_X$ is nonincreasing. At a simple dominant eigenvalue, its
derivative is given by the Hellmann--Feynman formula
\cite{Feynman1939,Lancaster1964,Magnus1985}, as recorded below.

\begin{proposition}
\label{prop:hf_sensitivity}
If the dominant eigenvalue of $M_\rho^{(X)}$ is simple at $\rho$, with
normalized eigenvector $u_\rho$, then
\begin{equation}
\label{eq:hf_derivative}
\phi_X'(\rho)
=
-u_\rho^{\mathrm T}(A^{\mathrm T}A+XX^{\mathrm T})u_\rho
=
-\mathcal V_X(u_\rho).
\end{equation}
\end{proposition}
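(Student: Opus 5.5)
Since $\phi_X$ is the largest eigenvalue of the affine symmetric family $M_\rho^{(X)}=H-\rho P_X$, where $P_X:=A^{\mathrm T}A+XX^{\mathrm T}$, I will obtain the derivative from two one-sided bounds. Both come from the variational characterization in Theorem~\ref{thm:Rayleigh_Ritz}, so no analytic perturbation theory is needed beyond one continuity fact at the end. Fix $\rho$ and let $u_\rho$ be the normalized dominant eigenvector. Note that $u_\rho^{\mathrm T}P_Xu_\rho=\|Au_\rho\|^2+\|X^{\mathrm T}u_\rho\|^2=\mathcal V_X(u_\rho)$, which is the second equality in \eqref{eq:hf_derivative}. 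I also note in advance that simplicity determines $u_\rho$ up to sign, so $\mathcal V_X(u_\rho)$ is well defined.

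\emph{Lower bound.} For every $h$, Theorem~\ref{thm:Rayleigh_Ritz} applied to the test vector $u_\rho$ gives
\[
\phi_X(\rho+h)\ge u_\rho^{\mathrm T}M_{\rho+h}^{(X)}u_\rho=\phi_X(\rho)-h\,\mathcal V_X(u_\rho).
\]
Dividing by $h>0$ gives $\liminf_{h\downarrow0}(\phi_X(\rho+h)-\phi_X(\rho))/h\ge-\mathcal V_X(u_\rho)$. Dividing by $h<0$ reverses the inequality and gives $\limsup_{h\uparrow0}(\phi_X(\rho+h)-\phi_X(\rho))/h\le-\mathcal V_X(u_\rho)$. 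The same inequality also shows that $\phi_X$ is convex, being a maximum of affine functions of $\rho$. This mirrors the concavity argument of Lemma~\ref{lem:spectral_bounds}.

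\emph{Reverse bound.} Let $u_{\rho+h}$ be a unit dominant eigenvector of $M_{\rho+h}^{(X)}$. Using it as a test vector at $\rho$ gives
\[
\phi_X(\rho)\ge\phi_X(\rho+h)+h\,\mathcal V_X(u_{\rho+h}),
\]
so $(\phi_X(\rho+h)-\phi_X(\rho))/h\le-\mathcal V_X(u_{\rho+h})$ for $h>0$, with the inequality reversed for $h<0$. The remaining step, and the main obstacle, is to show that $\mathcal V_X(u_{\rho+h})\to\mathcal V_X(u_\rho)$ as $h\to0$. This is the only place where simplicity is used.

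To handle it, I will use that $\|M_{\rho+h}^{(X)}-M_\rho^{(X)}\|\le|h|\,\|P_X\|$. Weyl's inequality then gives continuity of the two largest eigenvalues, so a positive spectral gap $g$ between the largest and second-largest eigenvalues of $M_\rho^{(X)}$ persists for small $|h|$. A Davis--Kahan ($\sin\Theta$) bound then yields $\min_{\pm}\|u_{\rho+h}\mp u_\rho\|=O(|h|/g)$.

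If I prefer to avoid citing Davis--Kahan, I can use a compactness argument in the style of the first proof of Theorem~\ref{thm:main_duality}. Any subsequence of $u_{\rho+h}$ has a further convergent subsequence, and its limit $\bar u$ is a unit vector satisfying $M_\rho^{(X)}\bar u=\phi_X(\rho)\bar u$, by continuity of $\phi_X$ (Weyl). Simplicity then forces $\bar u=\pm u_\rho$. Since $\mathcal V_X$ is an even quadratic form, it follows that $\mathcal V_X(u_{\rho+h})\to\mathcal V_X(u_\rho)$.

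Combining the two one-sided bounds, both one-sided difference quotients converge to $-\mathcal V_X(u_\rho)$. Hence $\phi_X$ is differentiable at $\rho$ and \eqref{eq:hf_derivative} holds.
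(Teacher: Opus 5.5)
Your proof is correct, but it takes a different route from the paper. The paper gives no separate argument. It simply invokes the classical Hellmann--Feynman formula, which rests on perturbation theory for a simple eigenvalue of the affine symmetric family $M_\rho^{(X)}$. In that theory the eigenvalue and a normalized eigenvector are differentiable (indeed analytic) in $\rho$. Differentiating $M_\rho^{(X)}u_\rho=\phi_X(\rho)u_\rho$ and multiplying by $u_\rho^{\mathrm T}$ then removes the eigenvector derivative, by symmetry and $u_\rho^{\mathrm T}u_\rho=1$. What remains is $\phi_X'(\rho)=-u_\rho^{\mathrm T}(A^{\mathrm T}A+XX^{\mathrm T})u_\rho$.

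You instead prove the formula as an envelope (Danskin-type) statement. The two Rayleigh--Ritz test-vector inequalities squeeze the difference quotient between $-\mathcal V_X(u_\rho)$ and $-\mathcal V_X(u_{\rho+h})$ on each side of $h=0$. Simplicity then enters only through continuity of the eigenvector direction up to sign. Your compactness argument supplies this without even needing persistence of the spectral gap.

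Your approach has two advantages. It is self-contained, using only tools already in the paper: Theorem~\ref{thm:Rayleigh_Ritz}, Weyl's inequality, and the compactness device from the first proof of Theorem~\ref{thm:main_duality}. It also survives the loss of simplicity: letting the test vectors range over the dominant eigenspace, the same inequalities give the one-sided derivatives $-\min\mathcal V_X$ (right) and $-\max\mathcal V_X$ (left) over its unit vectors.

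The perturbation-theoretic route, in turn, comes with analyticity of the simple eigenpair in $\rho$, so higher-order expansions are available from the same theory. Your argument yields exactly first-order differentiability at the point, which is all the proposition asserts.

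One cosmetic point: your symbol $P_X$ for $A^{\mathrm T}A+XX^{\mathrm T}$ clashes with the projector $P_X$ of \eqref{eq:projector}. Writing $\widehat A_X^{\mathrm T}\widehat A_X$ instead avoids the collision.
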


For the positive penalty path in
\eqref{eq:penalty_characterization_sm}, the nonattainable case is
covered by Theorem~\ref{thm:main_convergence} applied to $-H$ and
$\widehat A_X$. Hence, as $\rho\to+\infty$,
\begin{equation}
\label{eq:max_first_order_penalty}
\phi_X(\rho)
=
\lambda_X^c+\frac{c_X}{\rho}+O(\rho^{-2}),
\quad c_X>0.
\end{equation}
At outer continuation level $j$, let $u_j$ be the normalized
approximate dominant eigenvector returned by the certified inner solve
at $\rho=\rho_j$, so that $u_j\approx u_{\rho_j}$.
Equation~\eqref{eq:hf_derivative} motivates
\[
g_j:=\mathcal V_X(u_j),\quad
\widehat c_j^{\rm HF}:=\rho_j^2g_j,\quad
\theta_j^{\rm HF}:=\theta_j^{\rm pen}-\rho_jg_j.
\]
For the exact simple branch, that is,
$u_j=u_{\rho_j}$ and
$\theta_j^{\rm pen}=\phi_X(\rho_j)$, a second-order perturbation
expansion in $1/\rho_j$, together with
\eqref{eq:hf_derivative}, gives
\[
\widehat c_j^{\rm HF}=c_X+O(\rho_j^{-1}),
\quad
\theta_j^{\rm HF}=\lambda_X^c+O(\rho_j^{-2}).
\]
The corrected value is used only as a numerical diagnostic and enters
neither the continuation update nor the stopping test.

Prediction is enabled only after two consecutive inner solves satisfy
$r_M\leq\varepsilon_{\rm in}$ and the HF data stabilize. Specifically,
\begin{equation}
\label{eq:hf_reliability}
r_{c,j}
=
\frac{|\widehat c_j^{\rm HF}-\widehat c_{j-1}^{\rm HF}|}
{\max\{c_{\min},\widehat c_j^{\rm HF},\widehat c_{j-1}^{\rm HF}\}},
\quad
r_{{\rm mon},j}
=
\frac{\max\{0,\theta_j^{\rm pen}-\theta_{j-1}^{\rm pen}\}}
{\max\{1,|\theta_{j-1}^{\rm pen}|\}}.
\end{equation}
The HF data are accepted when
$\widehat c_j^{\rm HF}>c_{\min}$,
$r_{c,j}\leq\tau_{\rm HF}$, and
$r_{{\rm mon},j}\leq\tau_{\rm mon}$.
The prediction tolerance $\varepsilon_{\rm pred}$ controls only the
estimated value tail and is not a stopping tolerance. The value-tail
target is
$\rho_{{\rm HF},j}^{\rm tar}
=\widehat c_j^{\rm HF}/
\{\varepsilon_{\rm pred}\max(1,|\theta_j^{\rm pen}|)\}$.
When the projection below is valid and
$r_{{\rm KKT},j}>\varepsilon_{\rm KKT}$, we also use the practical
first-order extrapolation
$\rho_{{\rm KKT},j}^{\rm tar}
=\rho_jr_{{\rm KKT},j}/\varepsilon_{\rm KKT}$.
This KKT target is used only after the HF reliability gate has passed.
With unavailable targets omitted,
\begin{equation}
\label{eq:rho_target}
\rho_j^{\rm tar}
=
\max\{\rho_{{\rm HF},j}^{\rm tar},\rho_{{\rm KKT},j}^{\rm tar}\},
\quad
\rho_{\rm next}
=
\min\{\rho_{\max},q_{\max}\rho_j,
      \max(q\rho_j,\rho_j^{\rm tar})\}.
\end{equation}
If the HF reliability test fails, the geometric update
$\rho_{\rm next}=\min\{\rho_{\max},q\rho_j\}$ is used. The cap
$q_{\max}$ in \eqref{eq:rho_target} limits the size of any predictor
jump.

\subsection{Projected certification}
\label{subsec:projected_certification}

Because $A$ has full row rank, $X^{\mathrm T}X=I_s$, and $AX=0$,
$A^{\mathrm T}(AA^{\mathrm T})^{-1}A$ and $XX^{\mathrm T}$ are the
orthogonal projectors onto $\mathcal R(A^{\mathrm T})$ and
$\operatorname{span}(X)$, respectively, and their ranges are
orthogonal. Thus the orthogonal projector onto $\mathcal S_X$ is
\begin{equation}
\label{eq:projector}
P_X
=
I-A^{\mathrm T}(AA^{\mathrm T})^{-1}A-XX^{\mathrm T},
\end{equation}
where the last term is omitted when $s=0$.
The projector need not be formed explicitly: to apply
$P_X$ to $v$, solve the system $AA^{\mathrm T}y=Av$ and set
$
P_Xv=v-A^{\mathrm T}y-X(X^{\mathrm T}v).
$

If $\|P_Xu_j\|>\tau_{\rm proj}$, define
$\widehat u_j=P_Xu_j/\|P_Xu_j\|$ and
$\widehat\theta_j=\widehat u_j^{\mathrm T}H\widehat u_j$.
The projected feasibility and KKT residuals are
\begin{equation}
\label{eq:projected_residuals}
r_{{\rm feas},j}
=
\bigl(\|A\widehat u_j\|^2+\|X^{\mathrm T}\widehat u_j\|^2\bigr)^{1/2},
\quad
r_{{\rm KKT},j}
=
\frac{\|P_X(H\widehat u_j-\widehat\theta_j\widehat u_j)\|}
{\max\{1,\|H\widehat u_j\|+|\widehat\theta_j|\}}.
\end{equation}
In exact arithmetic $r_{{\rm feas},j}=0$; it is retained to monitor
numerical errors in the projection step.

With $\widehat\theta_{\rm prev}$ denoting the preceding valid projected
Rayleigh quotient, set
\[
r_{{\rm out},j}
=
\frac{|\widehat\theta_j-\widehat\theta_{\rm prev}|}
{\max\{1,|\widehat\theta_j|,|\widehat\theta_{\rm prev}|\}}.
\]
The first valid projected level initializes
$\widehat\theta_{\rm prev}$. The certification can therefore occur only at
a subsequent valid projected level. Acceptance requires
\begin{equation}
\label{eq:outer_stop}
r_{M,j}\leq\varepsilon_{\rm in},\quad
r_{{\rm feas},j}\leq\varepsilon_{\rm feas},\quad
r_{{\rm KKT},j}\leq\varepsilon_{\rm KKT},\quad
r_{{\rm out},j}\leq\varepsilon_{\rm out}.
\end{equation}
Projection is used only for certification and reporting; continuation
follows the penalized dominant eigenbranch.
The projection step also remains matrix-free, since applying
$AA^{\mathrm T}$ requires only matrix-vector products with
$A^{\mathrm T}$ and $A$.

\subsection{Matrix-free Penalty--Split--Merge algorithm}
\label{subsec:psm_algorithm}

Algorithm~\ref{alg:penalty_sm} summarizes one constrained eigenpair
computation along the positive penalty path
\eqref{eq:penalty_characterization_sm}. Subsequent eigenpairs are
obtained by appending each accepted $\widehat u$ to $X$ and
reorthogonalizing the basis when needed.

\begin{algorithm}[!htbp]
\caption{Matrix-free Penalty--Split--Merge algorithm}
\label{alg:penalty_sm}
\begin{algorithmic}[1]
\REQUIRE Products with $H$, $A$, and $A^{\mathrm T}$; feasible
orthonormal basis $X$; bounds
$\underline\lambda_H,\overline\lambda_H,\overline\nu_A$;
$\rho_0>0$, $1<q\leq q_{\max}$,
$\rho_{\max}\geq\rho_0$, $j_{\max}\geq1$; tolerances
$\varepsilon_{\rm in}$, $\varepsilon_{\rm feas}$,
$\varepsilon_{\rm KKT}$, $\varepsilon_{\rm out}$,
$\varepsilon_{\rm pred}$, $\tau_{\rm proj}$; predictor parameters
$c_{\min}$, $\tau_{\rm HF}$, $\tau_{\rm mon}$; inner-safeguard
parameters described above.
\ENSURE Approximate constrained eigenpair
$(\widehat\theta,\widehat u)$, or \textsc{failure}.
\STATE Set $j=0$, $\rho_j=\rho_0$, choose $\|u^{(0)}\|=1$, and mark
$\widehat\theta_{\rm prev}$ unavailable.
\WHILE{$j<j_{\max}$}
\STATE Define $M_{\rho_j}^{(X)}$ and $\mathcal K_{\rho_j}^{(X)}$ and
apply safeguarded Split--Merge, warm-started with $u^{(j)}$, to obtain
$(\mu_j,u_j)$ with $\|u_j\|=1$.
\STATE Set
$\theta_j^{\rm pen}=\sigma_{\rho_j}\mu_j-\eta_{\rho_j}$ and compute
$r_{M,j}$; return \textsc{failure} if
$r_{M,j}>\varepsilon_{\rm in}$.
\STATE Compute
$g_j=\|Au_j\|^2+\|X^{\mathrm T}u_j\|^2$,
$\widehat c_j^{\rm HF}=\rho_j^2g_j$, and $p_j=P_Xu_j$.
\IF{$\|p_j\|>\tau_{\rm proj}$}
\STATE Form $(\widehat\theta_j,\widehat u_j)$ and compute
$r_{{\rm feas},j}$ and $r_{{\rm KKT},j}$.
\IF{$\widehat\theta_{\rm prev}$ is available}
\STATE Compute $r_{{\rm out},j}$; if \eqref{eq:outer_stop} holds,
\RETURN $(\widehat\theta_j,\widehat u_j)$.
\ENDIF
\STATE Set $\widehat\theta_{\rm prev}=\widehat\theta_j$.
\ENDIF
\IF{$\rho_j\geq\rho_{\max}$}
\RETURN \textsc{failure}.
\ENDIF
\STATE Set $\rho_{\rm next}=\min\{\rho_{\max},q\rho_j\}$.
\IF{$j\geq1$ and the HF reliability test
\eqref{eq:hf_reliability} holds}
\STATE Form $\rho_j^{\rm tar}$ from the available HF/KKT targets and
update $\rho_{\rm next}$ according to \eqref{eq:rho_target}.
\ENDIF
\STATE Set $u^{(j+1)}=u_j$, $\rho_{j+1}=\rho_{\rm next}$, and
$j\leftarrow j+1$.
\ENDWHILE
\RETURN \textsc{failure}.
\end{algorithmic}
\end{algorithm}

Each application of $M_\rho^{(X)}$ requires one product with $H$,
one with $A$, one with $A^{\mathrm T}$, and $O(ns)$ work for the thin
deflation products. The HF quantity reuses the same constraint products
when available. Krylov rescue and projected certification can also be
implemented using matrix-free products; hence neither
$A^{\mathrm T}A$, $XX^{\mathrm T}$, $P_X$, nor a null-space basis
needs to be formed explicitly.

\section{Numerical Experiments}
\label{sec:numerical_experiments}

The experiments examine the predicted penalty asymptotics, assess
the HF/KKT continuation and projected certification, compare PSM with
standard eigensolvers, and evaluate the safeguarded implementation on
larger-scale problems. We use \emph{PSM} to denote
Algorithm~\ref{alg:penalty_sm}. In the end-to-end runtime comparisons,
PSM uses the verified short-recurrence Lanczos safeguard with the
budget-80 trigger described below; \emph{PSM-LV} denotes this
implementation only in the large-scale comparison.

All experiments were performed in MATLAB R2017b on a Windows system
equipped with an Intel Core i7-7700HQ processor (2.8 GHz) and 24 GB of RAM.
The inner eigensolvers access $H$, $A$, and $A^{\mathrm T}$ only through
matrix-vector products. A K-app denotes one application of the
matrix-free penalized inner operator or its shifted/scaled form, and
$N_K$ denotes the total number of inner K-apps. Inner safeguard steps
and fresh-residual verification calls are included in $N_K$; outer
projection and certification work is included in the reported
wall-clock times.

\subsection{Experimental setup}
\label{subsec:numerical_setup}

Controlled matrix-free instances use coordinates in which
$\mathcal N(A)=\operatorname{span}\{e_1,\ldots,e_{n-m}\}$, with
$
A=[\,0,\Sigma\,],\quad
H=
\begin{bmatrix}
D&E^{\mathrm T}\\
E&B
\end{bmatrix},
$
where $D=\operatorname{diag}(d)$, $B$ and $\Sigma$ are diagonal,
$\Sigma$ is nonsingular, and
$E=0.10\,gh^{\mathrm T}$ with $\|g\|=\|h\|=1$.
We set $d_1=1$, $d_2=1-\Delta$, and $d_i<1-\Delta$ for $i\geq3$.
The target-coupling parameter is $|h_2|$, so
$\|Ee_2\|=0.10|h_2|$.

For deflated tests, $X=e_1$ and the target is $e_2$, hence
$\lambda_X^c=1-\Delta$ exactly. Unless stated otherwise,
$|h_2|=0$ in the attainable regime and $|h_2|=0.03$ in the weakly
nonattainable (WNA) regime. Fixed deterministic seed families are used,
and paired methods share the same instance and starting vector.
Except for the theory--algorithm bridge, which starts from a small
perturbation of the known target to identify the corresponding
penalized branch, reference eigenpairs are used only for post-processing
and are not used in stopping criteria. For these controlled instances,
the certification projector is applied directly in the known
coordinates; its cost is included in the reported wall-clock times.
Unless stated otherwise, the parameters are those in
Table~\ref{tab:algorithm_parameters}. The last two rows give the fixed
baseline safeguard parameters; the implementation used in the
end-to-end runtime comparisons is specified below.

\begin{table}[!htbp]
\centering
\small
\setlength{\tabcolsep}{3.5pt}
\caption{Default parameters used in the PSM experiments.}
\label{tab:algorithm_parameters}
\begin{tabular}{@{}lll@{}}
\toprule
Parameter & Default & Role\\
\midrule
$\rho_0,\ q,\ q_{\max}$
& $10^{-2},\,2,\,4$
& penalty continuation\\
$\rho_{\max},\ j_{\max}$
& $10^6,\,24$
& continuation safeguards\\
$\varepsilon_{\rm feas},\varepsilon_{\rm KKT},
  \varepsilon_{\rm out}$
& $10^{-6}$
& projected certification\\
$\varepsilon_{\rm in}$
& $5\times10^{-8}$
& penalized eigen-residual\\
$\varepsilon_{\rm pred},\tau_{\rm proj}$
& $\varepsilon_{\rm KKT},\,10^{-12}$
& prediction/projection\\
$c_{\min},\tau_{\rm HF},\tau_{\rm mon}$
& $10^{-12},\,0.05,\,10^{-7}$
& HF reliability test\\
$\varepsilon_\sigma,\delta_{\rm br}$
& $10^{-10},\,10^{-12}$
& inner/branch safeguards\\
check interval, $\tau_{\rm prog},m_{\rm stag}$
& $5,\,0.9,\,2$
& stagnation detection\\
rescue dimensions/escape cycles
& $12,24,36\,/\,96$
& adaptive Krylov safeguard\\
\bottomrule
\end{tabular}
\end{table}

\subsection{Theory validation and penalty calibration}
\label{subsec:theory_validation}

We use three complementary tests: an exactly solvable $2\times2$
model, a structured dense family with $n=192$, $m=40$,
$\Delta=10^{-3}$, and target-coupling parameter $0.25$ (five trials),
and a PSM theory--algorithm bridge with target couplings $0.05$, $0.15$,
and $0.30$ (three trials each).
Table~\ref{tab:theory_validation} summarizes the first-order behavior.

\begin{table}[!htbp]
\centering
\scriptsize
\setlength{\tabcolsep}{3.2pt}
\caption{First-order penalty asymptotics. The predicted log--log
slopes of the eigenvalue error $e_\rho$ and angle error
$\sin\angle(u_\rho,u_*)$ are $-1$, and $\rho e_\rho/c\to1$.
Structured-dense entries are five-trial medians; bracketed PSM entries
are ranges of the three coupling-wise medians.}
\label{tab:theory_validation}
\begin{tabular}{@{}lccc@{}}
\toprule
Test & Eigenvalue slope & Angle slope & Normalized constant\\
\midrule
Exact $2\times2$
& $-1.000007$
& $-1.000007$
& $0.999999$--$1.000001$\\
Structured dense
& $-0.999761$
& $-0.999750$
& $1.000013$\\
PSM bridge
& $[-0.9899,-0.9874]$
& $[-0.9893,-0.9867]$
& $[0.9942,0.9953]$\\
\bottomrule
\end{tabular}
\end{table}

The results are consistent with the predicted $O(\rho^{-1})$ error
and leading coefficient. All nine PSM bridge paths are certified; the
three coupling-wise median dense-reference eigenvalue slopes lie in
$[-0.9992,-0.9990]$, and the largest PSM inner residual is
$9.75\times10^{-9}$. In 20 exact-model calibration cases, the median
ratio of the penalty actually required to meet the target error to its
first-order prediction is $1.00365$.

For the continuation ablation at target tolerance $10^{-6}$,
geometric continuation and the HF-only value target both require
median counts of 14 levels and 2056 K-apps in the WNA regime. The
combined HF/KKT target with $q_{\max}=4$ reduces these medians to
13 levels and 1889 K-apps, with median final penalty $81.92$; the
median paired trialwise K-app reduction is $7.81\%$. Increasing
$q_{\max}$ to $8$ or $16$ does not reduce the median level count
further but raises the median final penalty to $146.31$. We therefore
fix $q_{\max}=4$ in the subsequent experiments.

\subsection{Accuracy, robustness, and end-to-end performance}
\label{subsec:accuracy_robustness}

To preserve the validation protocol, the operational-accuracy and
robustness tests use the fixed geometric grid
$\rho_j=10^{-2}2^j$ rather than the adaptive HF/KKT update. The latter
is assessed in the ablation above and used in the end-to-end
comparisons below. The accuracy test uses $n=2048$, $m/n=0.2$,
$\Delta=10^{-3}$, both regimes, five trials per case, and target
tolerances $\varepsilon=10^{-4},10^{-5},10^{-6}$. We set
$\varepsilon_{\rm feas}=\varepsilon_{\rm KKT}
=\varepsilon_{\rm out}=\varepsilon$ and
$\varepsilon_{\rm in}=\max\{10^{-9},\varepsilon/20\}$.

\begin{table}[!htbp]
\centering
\scriptsize
\setlength{\tabcolsep}{3.5pt}
\caption{Operational accuracy of PSM. Penalties are five-trial
medians; each residual entry is the larger of the attainable and WNA
medians.}
\label{tab:operational_accuracy}
\begin{tabular}{@{}cccccc@{}}
\toprule
Tol.
& $\rho_{\rm A}$
& $\rho_{\rm WNA}$
& max med. $r_{\rm KKT}$
& max med. $r_M$
& Certified\\
\midrule
$10^{-4}$
& $0.02$ & $0.64$
& $9.13\times10^{-5}$
& $4.26\times10^{-6}$
& $10/10$\\
$10^{-5}$
& $0.02$ & $10.24$
& $7.00\times10^{-6}$
& $1.80\times10^{-7}$
& $10/10$\\
$10^{-6}$
& $0.02$ & $81.92$
& $9.23\times10^{-7}$
& $3.65\times10^{-8}$
& $10/10$\\
\bottomrule
\end{tabular}
\end{table}

As shown in Table~\ref{tab:operational_accuracy}, all 30 runs
satisfy the certificate. The attainable cases certify
at the earliest eligible second level, $\rho=0.02$, whereas the WNA
penalty increases as the tolerance is tightened, consistently with the
finite-versus-asymptotic dichotomy. A broader test over
$n\in\{2048,8192\}$, $m/n\in\{0.1,0.2,0.3\}$,
$\Delta\in\{10^{-3},10^{-2}\}$, and both regimes gives 120/120
certified PSM runs; the largest groupwise median KKT and inner
residuals are $9.376\times10^{-7}$ and $4.586\times10^{-8}$.

We next compare PSM with baseline implementations of power
iteration~\cite{GolubVanLoan2013}, full-reorthogonalized
Lanczos~\cite{Lanczos1950,Parlett1998}, unpreconditioned
LOBPCG~\cite{Knyazev2001}, and the matrix-free, certification-matched
MATLAB \texttt{eigs} baseline, \texttt{eigs-cert}. The latter applies
the original penalized operator through a function handle, uses the
same initial inner tolerance, and accepts a Ritz vector only after a
fresh residual check; all retries and fresh-residual applications are
counted.
The 80 trials cover
$n\in\{1024,2048,4096,8192\}$, $m/n=0.2$,
$\Delta\in\{10^{-3},10^{-2}\}$, and both regimes. Five trials are
performed for each setting. All methods are evaluated on the same instances with the
same starting vectors, HF/KKT continuation rule, and certification
procedure; the execution order is balanced across methods.
PSM uses the verified short-recurrence Lanczos safeguard with the fixed
budget-$80$ trigger. Warm-up runs are excluded from the reported
wall-clock times.
For problem $p$, let $\mathcal C_p$ be the set of certified methods and
define
$r_{p,s}=T_{p,s}/\min\limits_{j\in\mathcal C_p}T_{p,j}$.
An uncertified run is assigned $r_{p,s}=\infty$; the performance
profile reports the fraction of all 80 problems with
$r_{p,s}\leq\tau$.
Figure~\ref{fig:performance_profile} and
Table~\ref{tab:paired_speedups} summarize the overall and paired
runtime comparisons, respectively.

\begin{figure}[!htbp]
\centering
\includegraphics[width=0.6\linewidth]
{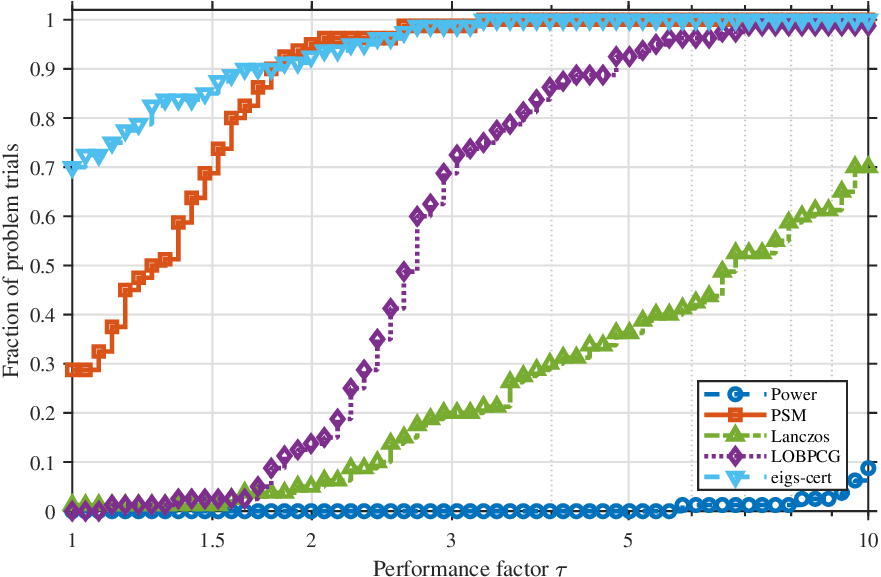}
\caption{End-to-end runtime performance profile for the moderate-scale
comparison. Uncertified runs receive infinite performance ratio, and
all 80 trials are included.}
\label{fig:performance_profile}
\end{figure}

\begin{table}[!htbp]
\centering
\scriptsize
\setlength{\tabcolsep}{3.2pt}
\caption{Paired end-to-end comparison with PSM. Ratios are medians of
the trialwise $T_{\rm comp}/T_{\rm PSM}$ and
$N_{K,\rm comp}/N_{K,\rm PSM}$ values over jointly certified pairs;
values above one favor PSM. The win fraction is the fraction of such
pairs with $T_{\rm PSM}<T_{\rm comp}$.}
\label{tab:paired_speedups}
\begin{tabular}{@{}lcccc@{}}
\toprule
Method
& Joint cert.
& $T_{\rm comp}/T_{\rm PSM}$
& PSM win frac.
& $N_{K,\rm comp}/N_{K,\rm PSM}$\\
\midrule
Power
& $38/80$
& $11.271$
& $1.000$
& $15.762$\\
Lanczos
& $80/80$
& $5.093$
& $0.975$
& $0.404$\\
LOBPCG
& $80/80$
& $1.932$
& $0.975$
& $1.741$\\
\texttt{eigs-cert}
& $80/80$
& $0.804$
& $0.300$
& $0.464$\\
\bottomrule
\end{tabular}
\end{table}

PSM, Lanczos, LOBPCG, and \texttt{eigs-cert} certify all 80 trials,
whereas power iteration certifies 38. PSM is fastest on $28.75\%$ of
the trials and is within a factor two of the best certified runtime on
$95\%$. It is faster than the tested Lanczos and LOBPCG implementations
in 78/80 pairs. Among the comparators, \texttt{eigs-cert} gives the
strongest wall-clock performance, with median
$T_{\rm eigs}/T_{\rm PSM}=0.804$. Thus PSM is competitive on this
benchmark, but the results do not support universal runtime dominance.
A fresh-seed WNA check at $n=2048$ and $8192$ also gives 13 penalty
levels, final penalty $81.92$, and full certification, with no
fresh-residual rejection. Relative to the same safeguarded
implementation without the proactive trigger, the budget-$80$ trigger
reduces the paired median K-app count by about $15.6\%$.

\subsection{Large-scale matrix-free behavior}
\label{subsec:large_scale}
The large-scale holdout uses a second seed family that was
not used to select the safeguard parameters. It consists of attainable
small-gap instances with
$n\in\{8192,16384,32768,65536,131072,262144\}$,
$m/n\approx0.08$, $\Delta=10^{-3}$, five paired trials per dimension,
and target tolerance $10^{-6}$.
Here \emph{PSM-LV} denotes PSM with the verified
short-recurrence Lanczos safeguard and the proactive budget-$80$
trigger, used at most once per penalized inner solve. The same
\texttt{eigs-cert} comparator is used with initial ARPACK tolerance
$\varepsilon_{\rm in}=5\times10^{-8}$ and fresh residual
verification. Table~\ref{tab:final_large_scale} summarizes the paired
results.

\begin{table}[!htbp]
\centering
\scriptsize
\setlength{\tabcolsep}{3.3pt}
\caption{Large-scale holdout. The PSM-LV and \texttt{eigs-cert}
columns give median wall-clock times in seconds. Ratio columns are
medians of the five trialwise PSM-LV/\texttt{eigs-cert} ratios; values
below one favor PSM-LV. ``LV wins'' counts paired trials with smaller
PSM-LV runtime.}
\label{tab:final_large_scale}
\begin{tabular}{@{}rrrrrr@{}}
\toprule
$n$ & PSM-LV & \texttt{eigs-cert}
& $T_{\rm LV}/T_{\rm eigs}$ & LV wins
& $N_{K,\rm LV}/N_{K,\rm eigs}$\\
\midrule
8192   & 0.041 & 0.046 & 0.849 & 5/5 & 1.202\\
16384  & 0.073 & 0.086 & 0.810 & 4/5 & 1.096\\
32768  & 0.111 & 0.217 & 0.475 & 5/5 & 1.096\\
65536  & 0.245 & 0.365 & 0.671 & 5/5 & 1.097\\
131072 & 0.888 & 0.960 & 0.925 & 5/5 & 1.097\\
262144 & 2.225 & 2.332 & 0.956 & 5/5 & 1.097\\
\bottomrule
\end{tabular}
\end{table}

As shown in Table~\ref{tab:final_large_scale}, all 30 pairs are
certified, and PSM-LV is faster in 29/30 paired trials. At the two
largest dimensions it is faster in all five trials, with median paired
time ratios $0.925$ and $0.956$. For $n\geq16384$, the paired $N_K$
ratio is about $1.097$, so the observed runtime advantage is not
explained by fewer operator applications. Every penalized
\texttt{eigs-cert} solve passes the fresh residual check on the first
ARPACK call, with no tolerance-tightening retry.
For these runs, the basis-storage model counts 13 basis/work vectors
for PSM-LV and 20 for ARPACK, corresponding to 26 and 40 MiB,
respectively, at $n=262144$. Common problem storage and MATLAB overhead
are excluded; these are storage-model estimates rather than measured
peak process memory.

\section{Conclusions}
\label{sec:conclusion}

This paper developed a quantitative perspective on penalty paths for
constrained extremal eigenvalue problems, extending classical
qualitative characterizations to quantitative spectral relations. By
relating the reduced-space eigenvalues to the eigenvalues of full-space
penalized problems, the analysis characterized the penalty path
behavior and identified the conditions for finite recovery. When finite
recovery did not occur, the first-order asymptotic expansion quantified
the decay of the penalty error and provided information for penalty
parameter prediction. These results also led to a matrix-free
Penalty--Split--Merge framework that combined penalty continuation,
Split--Merge inner iterations, and projected certification. The
framework avoided explicit null-space bases and penalty matrices while
accessing the relevant operators through matrix-vector products.
Numerical experiments examined the penalty behavior, certification
procedure, and computational performance of the proposed framework on
problems of different scales.
Future work includes extending the framework to constrained generalized eigenvalue problems, developing adaptive strategies for automatic selection of the penalty parameter schedule, and integrating the proposed certification mechanism into black-box eigensolvers for large-scale applications.

\section*{Acknowledgments}

The authors used generative AI tools to assist with language editing and
presentation, the organization of related literature, and the refinement
of algorithm descriptions and numerical experiment code. All mathematical
statements, proofs, algorithms, references, code, and numerical results
were reviewed and verified by the authors. The authors assume
responsibility for all content.

\bibliographystyle{siamplain}
\bibliography{references-new}

@article{AnstreicherWright00,
  author  = {Anstreicher, Kurt M. and Wright, Margaret H.},
  title   = {A note on the augmented Hessian when the reduced Hessian is semidefinite},
  journal = {SIAM J. Optim.},
  volume  = {11},
  pages   = {243--253},
  year    = {2000}
}

@article{Benzi2005,
  author  = {Benzi, Michele},
  title   = {Preconditioning techniques for large linear systems: A survey},
  journal = {J. Comput. Phys.},
  volume  = {182},
  pages   = {418--477},
  year    = {2002}
}

@book{BoydElGhaouiFeronBalakrishnan94,
  author    = {Boyd, Stephen and {El Ghaoui}, Laurent and Feron, Eric and Balakrishnan, Venkataramanan},
  title     = {Linear Matrix Inequalities in System and Control Theory},
  series    = {SIAM Studies in Applied Mathematics},
  volume    = {15},
  publisher = {SIAM},
  address   = {Philadelphia, PA},
  year      = {1994}
}

@article{Feynman1939,
  author  = {Feynman, Richard P.},
  title   = {Forces in molecules},
  journal = {Phys. Rev.},
  volume  = {56},
  pages   = {340--343},
  year    = {1939}
}

@article{Finsler37,
  author  = {Finsler, Paul},
  title   = {{\"U}ber das Vorkommen definiter und semidefiniter Formen in Scharen quadratischer Formen},
  journal = {Comment. Math. Helv.},
  volume  = {9},
  pages   = {188--192},
  year    = {1937}
}

@book{GolubVanLoan2013,
  author    = {Golub, Gene H. and Van Loan, Charles F.},
  title     = {Matrix Computations},
  edition   = {4th},
  publisher = {Johns Hopkins University Press},
  address   = {Baltimore, MD},
  year      = {2013}
}

@article{GouldHribarNocedal01,
  author  = {Gould, Nicholas I. M. and Hribar, Mary E. and Nocedal, Jorge},
  title   = {On the solution of equality constrained quadratic programming problems arising in optimization},
  journal = {SIAM J. Sci. Comput.},
  volume  = {23},
  pages   = {1376--1395},
  year    = {2001}
}

@article{Hestenes69,
  author  = {Hestenes, Magnus R.},
  title   = {Multiplier and gradient methods},
  journal = {J. Optim. Theory Appl.},
  volume  = {4},
  pages   = {303--320},
  year    = {1969}
}

@book{HornJohnson12,
  author    = {Horn, Roger A. and Johnson, Charles R.},
  title     = {Matrix Analysis},
  edition   = {2nd},
  publisher = {Cambridge University Press},
  address   = {Cambridge},
  year      = {2012}
}

@article{Knyazev2001,
  author  = {Knyazev, Andrew V.},
  title   = {Toward the optimal preconditioned eigensolver: Locally optimal block preconditioned conjugate gradient method},
  journal = {SIAM J. Sci. Comput.},
  volume  = {23},
  pages   = {517--541},
  year    = {2001}
}

@article{Lancaster1964,
  author  = {Lancaster, Peter},
  title   = {On eigenvalues of matrices dependent on a parameter},
  journal = {Numer. Math.},
  volume  = {6},
  pages   = {377--387},
  year    = {1964}
}

@article{Lanczos1950,
  author  = {Lanczos, Cornelius},
  title   = {An iteration method for the solution of the eigenvalue problem of linear differential and integral operators},
  journal = {J. Res. Natl. Bur. Stand.},
  volume  = {45},
  pages   = {255--282},
  year    = {1950}
}

@article{LiuSongXia2026,
  author  = {Liu, Xiaozhi and Song, Mengmeng and Xia, Yong},
  title   = {{Split--Merge}: A difference-based approach for dominant eigenvalue problem},
  journal = {SIAM J. Optim.},
  year    = {2026},
  note    = {to appear}
}

@article{Magnus1985,
  author  = {Magnus, Jan R.},
  title   = {On differentiating eigenvalues and eigenvectors},
  journal = {Econometric Theory},
  volume  = {1},
  pages   = {179--191},
  year    = {1985}
}

@book{NocedalWright06,
  author    = {Nocedal, Jorge and Wright, Stephen J.},
  title     = {Numerical Optimization},
  edition   = {2nd},
  publisher = {Springer},
  address   = {New York},
  year      = {2006}
}

@book{Parlett1998,
  author    = {Parlett, Beresford N.},
  title     = {The Symmetric Eigenvalue Problem},
  publisher = {SIAM},
  address   = {Philadelphia, PA},
  year      = {1998}
}

@article{PolikTerlaky07,
  author  = {P{\'o}lik, Imre and Terlaky, Tam{\'a}s},
  title   = {A survey of the {S}-lemma},
  journal = {SIAM Rev.},
  volume  = {49},
  pages   = {371--418},
  year    = {2007}
}

@incollection{Powell69,
  author    = {Powell, Michael J. D.},
  title     = {A method for nonlinear constraints in minimization problems},
  booktitle = {Optimization},
  editor    = {Fletcher, R.},
  publisher = {Academic Press},
  address   = {London},
  pages     = {283--298},
  year      = {1969}
}

@book{Saad11,
  author    = {Saad, Yousef},
  title     = {Numerical Methods for Large Eigenvalue Problems},
  edition   = {revised},
  publisher = {SIAM},
  address   = {Philadelphia, PA},
  year      = {2011}
}

@article{ShaoChen25,
  author  = {Shao, Nian and Chen, Wenbin},
  title   = {Riemannian acceleration with preconditioning for symmetric eigenvalue problems},
  journal = {Numer. Math.},
  volume  = {157},
  pages   = {307--354},
  year    = {2025}
}

@article{ShaoChenBai25,
  author  = {Shao, Nian and Chen, Wenbin and Bai, Zhaojun},
  title   = {{EPIC}: A provable accelerated eigensolver based on preconditioning and implicit convexity},
  journal = {SIAM J. Matrix Anal. Appl.},
  volume  = {46},
  pages   = {45--73},
  year    = {2025}
}

@book{Strang2016,
  author    = {Strang, Gilbert},
  title     = {Introduction to Linear Algebra},
  edition   = {5th},
  publisher = {Wellesley-Cambridge Press},
  address   = {Wellesley, MA},
  year      = {2016}
}

@article{VandenbergheBoyd96,
  author  = {Vandenberghe, Lieven and Boyd, Stephen},
  title   = {Semidefinite programming},
  journal = {SIAM Rev.},
  volume  = {38},
  pages   = {49--95},
  year    = {1996}
}

@article{ZhouBaiLi21,
  author  = {Zhou, Yunshen and Bai, Zhaojun and Li, Ren-Cang},
  title   = {Linear constrained {Rayleigh} quotient optimization: Theory and algorithms},
  journal = {CSIAM Trans. Appl. Math.},
  volume  = {2},
  pages   = {195--262},
  year    = {2021}
}

\end{document}